\documentclass[12pt]{article}
\usepackage{amsfonts}
\usepackage{mathrsfs}
\usepackage{amsthm}
\usepackage{amsmath}
\usepackage{amssymb}
\usepackage{latexsym}
\usepackage{graphicx}
\usepackage{subfigure}

\usepackage{color,varioref}
\usepackage{longtable}
\usepackage{rotating,multirow,array}
\usepackage{enumerate}
\usepackage{caption}
\usepackage{float}
\captionsetup{font={scriptsize},labelfont=bf}

\definecolor{red}{rgb}{0.9,0,0}

\definecolor{green}{rgb}{0,0.9,0}

\definecolor{blue}{rgb}{0,0,0.9}

\setlength{\textheight}{21.3cm} \setlength{\textwidth}{6.3in}
\setlength{\topmargin}{0pt} \setlength{\evensidemargin}{1pt}
\setlength{\oddsidemargin}{1pt} \setlength{\headsep}{7pt}

\def\mc{\multicolumn}
\def\cA{{\cal A}} 
\def\cB{{\cal B}}    \def\cF{{\cal F}}

\def\cP{{\cal P}}

\def\cT{{\cal T}}

\def\pobj{{\mbox{\tt pobj}}}
\def\dobj{{\mbox{\tt dobj}}}

\newcommand{\inprod}[2]{\langle #1 , #2 \rangle}

\def\norm#1{\|#1\|}

\def\cI{{\cal I}}

\def\inprod#1#2{\langle#1, \, #2\rangle}

\def\ni{\noindent}

\def\diag{{\rm diag}}

\def\nn{\nonumber}

\newtheorem{theorem}{Theorem}[section]
\newtheorem{proposition}{Proposition}[section]

\newtheorem{definition}{Definition}[section]
\newtheorem{assumption}{Assumption}[section]

%\date{}

%%
%%
\begin{document}

\title{\bf A dual semismooth Newton based augmented Lagrangian method for large-scale linearly constrained sparse group square-root Lasso problems}

\author{
Chengjing Wang\thanks{School of Mathematics, Southwest Jiaotong University, No.999, Xian Road, West Park, High-tech Zone, Chengdu 611756, China ({Email: renascencewang@hotmail.com}). }
\ and Peipei Tang\thanks{{\bf Corresponding author}, School of Computer and
Computing Science, Zhejiang University City
College, Hangzhou 310015, China (Email: tangpp@zucc.edu.cn). This author's research is supported by the Natural Science Foundation of Zhejiang Province of China under Grant No. LY19A010028, the Zhejiang Science and Technology Plan Project of China (No. 2020C03091, No. 2021C01164) and the Scientific Research Foundation of Zhejiang University City College (No. X-202112).},
}

%\date{}
\maketitle

%\date{September 30, 2009; Revised, March 08, 2010}

%%
%%
\begin{abstract}
Square-root Lasso problems are proven robust regression problems. Furthermore, square-root regression problems with structured sparsity also plays an important role in statistics and machine learning.
In this paper, we focus on the numerical computation of large-scale linearly constrained sparse group square-root Lasso problems. In order to overcome the difficulty that there are two nonsmooth terms in the objective function, we propose a dual semismooth Newton (SSN) based augmented Lagrangian method (ALM) for it. That is, we apply the ALM to the dual problem with the subproblem solved by the SSN method. To apply the SSN method, the positive definiteness of the generalized Jacobian is very important. Hence we characterize the equivalence of its positive definiteness and the constraint nondegeneracy condition of the corresponding primal problem. In numerical implementation, we fully employ the second order sparsity so that the Newton direction can be efficiently obtained. Numerical experiments demonstrate the efficiency of the proposed algorithm.
\end{abstract}

\begin{keywords}
sparse group square-root Lasso, semismooth Newton method, augmented Lagrangian method
\end{keywords}

\section{Introduction}

In this paper, we consider the following linearly constrained sparse group square-root Lasso (cssLasso) problem
\begin{eqnarray}
\min_{x\in\mathbb{R}^{n}}\left\{\norm{\cA x-b}+\lambda_1\sum_{j=1}^{J}\omega_{j}\norm{x_{G_j}}+\lambda_2\norm{x}_{1}\,|\,\cB_{E} x-c_{E}=0,\,\cB_{I} x-c_{I}\in\mathbb{R}^{m_I}_{+}\right\},\label{eq:sparse group square-root Lasso}
\end{eqnarray}
where $\cA: \mathbb{R}^{n}\rightarrow\mathbb{R}^{m},\,\cB_{E}: \mathbb{R}^{n}\rightarrow\mathbb{R}^{m_E}$ and $\cB_{I}: \mathbb{R}^{n}\rightarrow\mathbb{R}^{m_I}$ are given linear mappings whose adjoints are denoted as $\cA^{*}, \cB_{E}^{*}$ and $\cB_{I}^{*}$, respectively, $b\in\mathbb{R}^{m}, c_{E}\in\mathbb{R}^{m_E}$ and $c_{I}\in\mathbb{R}^{m_I}$ are given vectors, $\omega_{j}> 0\ (j=1,2,\ldots,J)$ is a weight parameter, $G_{j}\, (j=1,2,\ldots,J)$ is an index set which contains the indices in the $j$th group, $x=(x_{G_{1}};x_{G_{2}};\ldots;x_{G_{J}})$, $\lambda_1,\lambda_2 \geq 0$ are regularization parameters, and $\mathbb{R}^{m_I}_{+}$ denotes an $m_I$-dimensional positive orthant cone. And $\norm{\cdot}$ and $\norm{\cdot}_{1}$ denote the $l_2$ norm and $l_1$ norm, respectively. Throughout the paper, we assume that $\cup_{j=1}^{J}G_{j}=\{1,2,\ldots,n\}$ and $G_{i}\cap G_{j}=\phi$ for $1\leq i < j \leq n$.

In statistics and machine learning, the Lasso model
\begin{eqnarray}
\min_{x\in\mathbb{R}^{n}}\left\{\frac{1}{2}\norm{\cA x-b}^{2}+\lambda\norm{x}_{1}\right\},\label{eq:Lasso}
\end{eqnarray}
where $\lambda>0$ is a regularization parameter, performs both variable selection and regularization in order to enhance the prediction accuracy and interpretability of the resulting statistical model. It was originally introduced in geophysics literature by Santosa and Symes \cite{SantosaS1986}, and later was independently rediscovered and popularized by Tibshirani \cite{Tibshirani1996}. Yuan and Lin \cite{YuanL2006} proposed the group Lasso model
\begin{eqnarray*}
\min_{x\in\mathbb{R}^{n}}\left\{\frac{1}{2}\norm{\cA x-b}^{2}+\lambda\sum_{j=1}^{J}\omega_{j}\norm{x_{G_j}}\right\},\label{eq:group Lasso}
\end{eqnarray*}
which is more suitable for variable selection and it can be regarded as an extension of the Lasso for selecting groups of variables. The problem of selecting grouped variables arises naturally in many practical situations with the multifactor analysis-of-variance problem. Friedman, Hastie, and Tibshirani \cite{FriedmanHT2010} considered a model with a more general penalty that blends the Lasso ($l_1$ norm) with the group Lasso ($l_2$ norm). This penalty yields solutions that are sparse at both the group and individual feature levels. This is the so called sparse group Lasso model as below
\begin{eqnarray}
\min_{x\in\mathbb{R}^{n}}\left\{\frac{1}{2}\norm{\cA x-b}^{2}+\lambda_1\sum_{j=1}^{J}\omega_{j}\norm{x_{G_j}}+\lambda_2\norm{x}_{1}\right\}.\label{eq:sparse group Lasso}
\end{eqnarray}

Although the Lasso model \eqref{eq:Lasso} is an attractive estimator, it relies on knowing the standard deviation $\varsigma$ of the noise. Estimation of $\varsigma$ is nontrivial when $n$ is large, particularly when $n\gg m$. In view of this flaw, Belloni, Chernozhukov and Wang \cite{BelloniCW2011} proposed the square-root Lasso model
\begin{eqnarray}
\min_{x\in\mathbb{R}^{n}}\Big\{\norm{\cA x-b}+\lambda\norm{x}_{1}\Big\}.\label{eq:square-root Lasso}
\end{eqnarray}
In contrast to the Lasso estimator \eqref{eq:Lasso}, the square-root Lasso estimator \eqref{eq:square-root Lasso} is independent of $\varsigma$. It has also been proved that the square-root Lasso estimator achieves the near-oracle rates of convergence under suitable design conditions. In addition, there are other aspects of significance for the square-root Lasso. The scaled Lasso proposed by Sun and Zhang \cite{SunZ2012} is essentially equivalent to the square-root Lasso \eqref{eq:square-root Lasso}. But the scaled Lasso is computationally expensive. Xu, Caramanis and Mannor \cite{XuCM} pointed out that the square-root Lasso \eqref{eq:square-root Lasso} is equivalent to a robust linear regression problem subject to an uncertainty set. It deserves mentioning that Stucky and van de Geer \cite{Sara2017} established the sharp oracle property of the square-root Lasso. Motivated by the wide applicability of group selection methods, Bunea, Lederer, and She \cite{BuneaLS} studied the group version of the square-root Lasso
\begin{eqnarray}
\min_{x\in\mathbb{R}^{n}}\left\{\norm{\cA x-b}+\lambda\sum_{j=1}^{J}\omega_{j}\norm{x_{G_j}}\right\}.\label{eq:group square-root Lasso}
\end{eqnarray}
They also showed that the group square-root Lasso estimator adapts to the unknown sparsity of the regression vector,
and has the same optimal estimation and prediction accuracy as the group Lasso estimators, under the same minimal conditions on the model.

Meanwhile, in \cite{AltenbuchingerRZSDWHGHOS,AtitallahTKAAH,GainesZ2018,JamesPR2010,LinSFL,ShiZL} and so on, the constrained Lasso was considered, which takes the following form
\begin{eqnarray*}
\min_{x\in\mathbb{R}^{n}}\left\{\left.\frac{1}{2}\norm{\cA x-b}^{2}+\lambda\norm{x}_{1}\,\right|\,\cB_{E} x-c_{E}=0,\,\cB_{I} x-c_{I}\in\mathbb{R}^{m_I}_{+}\right\}.\label{eq:constrained Lasso}
\end{eqnarray*}
It extends the widely-used Lasso to handle linear constraints allowing the user to incorporate prior information into the model.

In an almost parallel pattern, we consider the cssLasso \eqref{eq:sparse group square-root Lasso}, which may strike an effective compromise between the square-root Lasso and the group square-root Lasso, yielding sparseness at the group and individual predictor levels.
%We will formulate the robust regression problem
%with feature-wise disturbances, and show that this formulation is equivalent to the cssLasso problem.
It deserves mentioning that Chu, Toh and Zhang \cite{ChuTZ2021} recently considered the square-root regression problems without any constraints.
Yang and Xu \cite{YangX} characterize the equivalence of the cssLasso \eqref{eq:sparse group square-root Lasso} and the robust regression problem under a specially chosen uncertainty set. That is, they provide an interpretation of the cssLasso from a robustness perspective.

In recent years, great process has been made in the computation of the Lasso type problems. Li, Sun and Toh \cite{LiSunToh2018} proposed a semismooth Newton augmented Lagrangian method (SSN-ALM) to solve the Lasso problem \eqref{eq:Lasso}. Since problem \eqref{eq:Lasso} is piecewise linear-quadratic, by the conclusion in Sun's PhD thesis \cite{Sun1986} the subdifferential of the corresponding Lagrangian function $\cT_l$ is piecewise polyhedral, then by Robinson's work \cite{Robinson81}, the calmness condition holds for $\cT_l^{-1}$, which is equivalent to the metric subregularity condition holds for $\cT_l$ according to \cite[Theorem 3H.3]{DontchevR2009}. So based on the conclusion presented by Luque \cite{Luque}, the arbitrarily linear convergence rate can be guaranteed. In fact, the paper \cite{LiSunToh2018} goes beyond the Lasso model. The authors proved that $\cT_l$ is metrically subregular under the second order sufficient condition of the dual problem for a relatively more generalized model problem. Zhang et al. \cite{ZhangZST} also adopted an efficient Hessian based algorithm for solving large-scale sparse group Lasso problems \eqref{eq:sparse group Lasso}, which is essentially the SSN-ALM. Based on the local error bound condition established in \cite[Theorem 1]{ZhangJL}, they presented the metric subregularity condition for the primal problem. So the arbitrarily linear convergence rate can be guaranteed if applying the ALM on the dual problem.

As for the computation of the group square-root Lasso \eqref{eq:group square-root Lasso} and the more generalized cssLasso \eqref{eq:sparse group square-root Lasso}, it is more challenging mainly because the loss function is also nonsmooth in addition to the regularizer. We have to deal with the structured problem with two nonsmooth terms in computation. Although the primal alternating direction method of multipliers (pADMM) is adopted in \cite{LiZYL2015}, the efficiency of the pADMM is still not satisfying especially for some large-scale real data problems. So it is necessary to redesign an efficient algorithm for the cssLasso problem \eqref{eq:sparse group square-root Lasso}. Since the ALM is an ideal approach for the Lasso type problems, it is certainly a competitive candidate for the square-root Lasso type problems.
%However, the convergence theory must be guaranteed when applying the ALM. That is, the metric subregularity condition must be proved. It is still very interesting to prove that the primal-dual metric subregularity condition holds (i.e., the metric subregularity holds for $\cT_l$), though a one-sided metric subregularity (i.e., the metric subregularity holds for the primal problem or the dual problem) is enough for the ALM, because the metric subregularity for $\cT_l$ is essential for the ADMM type algorithm.
Besides, when applying the ALM, we need to solve the subproblems as accurately and efficiently as possible. The SSN method is usually an ideal approach to solve the subproblems since the sparse structure of the generalized Jacobian can be fully employed to greatly reduce the computational cost. But we must guarantee that the Jacobian of the subproblem is positive definite before using the SSN method. The positive definiteness of the Jacobian is not usually obvious, so we need to employ other equivalent conditions to characterize it. That is, we hope to prove the equivalence of the primal nondegeneracy condition and the positive definiteness of the Jacobian of the subproblem.

The remaining parts of this paper are organized as follows. In Section \ref{sec:Preliminaries}, we
introduce some basic knowledge about the proximal mapping and the error bound condition, which plays a key role in the analysis of the convergence rate of our algorithm. %In Section \ref{sec:Robust regression}, we establish the equivalence between the robust linear regression and the cssLasso problem.
In Section \ref{sec:Generalized Jacobian}, we describe the detailed structures of several generalized Jacobians. In Section \ref{sec:ALM}, we present the details of the algorithm. In Section \ref{sec:Theoretical results}, we present some theoretical results which include the characterization of the equivalent conditions, the strong semismoothness of the involved function and the convergence results. In Section \ref{sec:Numerical issues for solving the subproblem}, we discuss the numerical issues about how to efficiently solve the Newton direction. In Section \ref{sec:Numerical experiments}, we present the numerical results to demonstrate the efficiency of our algorithm. Finally, we give some concluding remarks in Section \ref{sec:Conclusion}.

\subsection{Additional notations}
\label{subsec:Additional notations}
Let $\mathcal{X}$ and $\mathcal{Y}$ be two real finite dimensional Euclidean spaces which are equipped with the inner product $\langle\cdot,\cdot\rangle$. We denote $x\circ y$ as the Hadamard product of two given vectors $x,y\in\mathcal{X}$. For a given vector $x$, $\textrm{supp}(x)$ denotes the support of $x$, i.e.,
the set of indices such that $x_{i}\neq 0$. For any convex function $p: S\subset\mathcal{X} \rightarrow (-\infty,\infty]$, its conjugate function is denoted by $p^*$, i.e., $p^*(x)=\sup_{y}\{\langle x,y \rangle-p(y)\}$. For a given closed convex set $C$ and a vector $x$, we denote the distance from $x$ to $C$ by $\text{dist}(x,C) := \inf_{y\in C}{\|x-y\|}$ and the Euclidean projection of $x$ onto $C$ by $\Pi_{C}(x) := \mathop{\mbox{argmin}}_{y\in C}{\|x-y\|}$, the interior of $C$ is defined as $\mbox{int}(C)$ and its boundary as $\partial(C)$. The set $T_{C}(z)$ denotes the tangent cone to the set $C$ at the point $z$, and $\textrm{lin}(C):=C\cap (-C)$ denotes the lineality space of $C$. For any set-valued mapping $F: \mathcal{X} \rightrightarrows \mathcal{Y}$, $\text{gph}\ F$ denotes the graph of $F$, i.e., $\text{gph}\ F:=\{(x,y)\in \mathcal{X}\times \mathcal{Y}\ |\ y \in F(x) \}$. In addition to $\mathbb{R}^{m_I}_{+}$, we use $\mathbb{R}^{m_I}_{-}$ to denote an $m_I$-dimensional negative orthant cone. We use $\mathbb{R}^{m_I}_{++}$ and $\mathbb{R}^{m_I}_{--}$ to denote two sets whose elements are $m_I$-dimensional vectors with all their entries positive and negative, respectively. We also use $\mathbb{S}^{m_I}$ to denote $m_I$-dimensional symmetric matrix space. For $\lambda>0$ and $r$ a positive integer, we define $B_{q,r}^{\lambda} := \{x\in\mathbb{R}^{r}\,|\,\norm{x}_{q}\leq \lambda\}$, where $q=1,2,\infty$.

\section{Preliminaries}
\label{sec:Preliminaries}
In this section, we discuss some properties of the convex composite optimization problem. It is very important for the local convergence rate of the proposed algorithm in this paper.

Given a closed proper convex function $f:\mathcal{X}\rightarrow(-\infty,+\infty]$, the proximal mapping $\mbox{Prox}_{f}(\cdot)$ associated with $f$ is defined by
\begin{eqnarray*}
\mbox{Prox}_{f}(x):=\mathop{\mbox{argmin}}_{u\in\mathcal{X}}\left\{f(x)+\frac{1}{2}\|u-x\|^{2}\right\},\quad \forall\, x\in\mathcal{X}.
\end{eqnarray*}
For any $x\in\mbox{dom}(f)$, by Moreau's Theorem (see, e.g., Theorem 31.5 of \cite{Rockafellar70}), we have $\mbox{Prox}_{tf}(x)+t\mbox{Prox}_{f^{*}/t}(x/t)=x$ with a given parameter $t>0$. The epigraph of $f$ is defined as the set
\begin{eqnarray*}
\mbox{epi}f:=\Big\{(x,c)\in \textrm{dom}(f)\times\mathbb{R}\ \Big|\ f(x)\leq c\Big\}.
\end{eqnarray*}

Let $\cT:\mathcal{X}\rightrightarrows\mathcal{Y}$ be a multifunction. It is called a monotone operator if
\begin{eqnarray*}
\langle x-x',w-w'\rangle\geq0,\quad\mbox{whenever}\ w\in \cT(x),\ w'\in \cT(x').
\end{eqnarray*}
The monotone operator is said to be maximal monotone if, in addition, the graph
\begin{eqnarray*}
\textrm{gph}\,\cT:=\Big\{(x,y)\in\mathcal{X}\times\mathcal{Y}\,|\,y\in \cT(x) \Big\}
\end{eqnarray*}
is not properly contained in the graph of any other monotone operator $\cT':\mathcal{X}\rightarrow\mathcal{Y}$.

In the work \cite{Rockafellar76a}, Rockafellar studied a fundamental proximal point algorithm to solve
\begin{eqnarray*}
0\in \cT(x),
\end{eqnarray*}
with $\cT$ being a maximal monotone operator. That is, for an arbitrary initial point $x^{0}$, a sequence $\{x^{k}\}$ is generated by the following rule
\begin{eqnarray*}
x^{k+1}\approx (\mathcal{I}+\sigma_{k}\cT)^{-1}(x^{k}).
\end{eqnarray*}

For the sake of later convenience, we also present the definition of the error bound condition, which is essentially important for establishing the local convergence rate of the proximal point algorithm in \cite{Rockafellar76a}.
\begin{definition}
Let $F:\mathcal{X}\rightrightarrows\mathcal{Y}$ be a multifunction and $y$ satisfy $F^{-1}(y)\neq\emptyset$. If there exists $\varepsilon>0$ such that
\begin{eqnarray}
\mbox{dist}(x,F^{-1}(y))\leq \kappa\mbox{dist}(y,F(x)),\quad  \forall\, x\in\mathcal{X}\ \mbox{such that}\ \mbox{dist}(y,F(x))\leq\varepsilon,\label{eq:error bound}
\end{eqnarray}
is valid, then $F$ is said to satisfy the error bound condition at the point $y$ with modulus $\kappa$.
\label{def:error bound}
\end{definition}

For the convenience of the statement, we denote $h(x)=\norm{x}$ and $p(x)=p_1(x)+p_2(x)$, where $p_1(x)=\lambda_1\sum_{j=1}^{J}\omega_{j}\norm{x_{G_j}}$, $p_2(x)=\lambda_2\norm{x}_{1}$, then we can write problem \eqref{eq:sparse group square-root Lasso} in the following form
\begin{eqnarray}
\min_{x\in\mathbb{R}^{n}}\left\{h(\mathcal{A}x-b)+p(x)\,\Big|\,\cB_{E} x-c_{E}=0,\,\cB_{I} x-c_{I}\in\mathbb{R}^{m_I}_{+}\right\}.
\label{eq:sparse group square-root Lasso2}
\end{eqnarray}
 By introducing slack variables $y$ and $z$, we can write problem \eqref{eq:sparse group square-root Lasso2} in an equivalent form
\begin{eqnarray*}
(P) & \max\limits_{(x,y,z)\in \mathbb{R}^{n}\times\mathbb{R}^{m}\times\mathbb{R}^{m_I}}\Big\{-(h(y)+p(x))\ \Big|\,\mathcal{A}x-y=b,\,\cB_{E} x-c_{E}=0,\\
& \cB_{I} x-c_{I}+z=0,\,z\in\mathbb{R}^{m_I}_{-}\Big\}.
\end{eqnarray*}
The dual of $(P)$ takes the following form
\begin{eqnarray*}
(D) & \min\limits_{u,w\in \mathbb{R}^{m},s\in\mathbb{R}^{n},v_{E}\in\mathbb{R}^{m_E},v_{I},\hat{v}_{I}\in\mathbb{R}^{m_I}}\Big\{h^{*}(w)+p^{*}(s)+\inprod{b}{u}+\inprod{c_{E}}{v_{E}}+\inprod{c_{I}}{v_{I}} \Big|\\
&\mathcal{A}^{*}u + \cB^{*}_{E} v_{E} + \cB^{*}_{I} v_{I} + s=0,\, -u+w=0,\,v_{I}-\hat{v}_{I}=0,\,\hat{v}_{I}\in\mathbb{R}^{m_I}_{-}\Big\}.
\end{eqnarray*}
The Lagrangian function for the dual problem $(D)$ is
\begin{eqnarray}\label{lagrangian function}
l(u,v_{E},v_{I},\hat{v}_{I},w,s;x,y,z) &=& h^{*}(w)+p^{*}(s)+\inprod{b}{u}+\inprod{c_{E}}{v_{E}}+\inprod{c_{I}}{v_{I}}\nn\\
&&-\inprod{x}{\mathcal{A}^{*}u+\cB^{*}_{E} v_{E}+\cB^{*}_{I} v_{I}+s}-\inprod{y}{-u+w}\,-\inprod{z}{v_{I}-\hat{v}_{I}},\nn\\
&&\hat{v}_{I}\in\mathbb{R}^{m_I}_{-}.
\end{eqnarray}
The KKT condition associated with the dual problem $(D)$ is given as follows
\begin{eqnarray}
&\mathcal{A}x-y=b,\,\cB_{E} x-c_{E}=0,\,\cB_{I} x-c_{I}+z=0,\nn\\
&\mathcal{A}^{*}u + \cB^{*}_{E} v_{E} + \cB^{*}_{I} v_{I} + s=0,\,-u+w=0,\,v_{I}-\hat{v}_{I}=0,\nn\\
&0\in\partial h^{*}(w)-y,\ 0\in\partial p^{*}(s)-x,\nn\\
&\hat{v}_{I}\in\mathbb{R}^{m_I}_{-},\,\cB_{I} x-c_{I}\in\mathbb{R}^{m_I}_{+},\,\inprod{\hat{v}_{I}}{\cB_{I} x-c_{I}}=0.\label{eq:KKT}
\end{eqnarray}
%\begin{eqnarray}
%&\mathcal{A}x-y=b,\,\cB_{E} x-c_{E}=0,\,\mathcal{A}^{*}u + \cB^{*}_{E} v_{E} + \cB^{*}_{I} v_{I} + s=0,\,-u+w=0,&\nn\\
%&0\in\partial h^{*}(w)-y,\ 0\in\partial p^{*}(s)-x,\ \mathbb{R}^{m_I}_{-}\ni v_{I}\,\perp\,\cB_{I} x-c_{I}\in\mathbb{R}^{m_I}_{+}.\label{eq:KKT}
%\end{eqnarray}

%For the sake of later convenience, we also present the definition of the error bound condition, which is essentially important for establishing the locally convergence rate of the proximal point algorithm in \cite{Rockafellar76a}.
%\begin{definition}
%Let $F:\mathcal{X}\rightrightarrows\mathcal{Y}$ be a multifunction and $y$ satisfy $F^{-1}(y)\neq\emptyset$. If there exists $\varepsilon>0$ such that
%\begin{eqnarray}
%\mbox{dist}(x,F^{-1}(y))\leq \kappa\mbox{dist}(y,F(x)),\quad  \forall\, x\in\mathcal{X}\ \mbox{such that}\ \mbox{dist}(y,F(x))\leq\varepsilon,\label{eq:error bound}
%\end{eqnarray}
%is valid, then $F$ is said to satisfy the error bound condition at the point $y$ with modulus $\kappa$.
%\label{def:error bound}
%\end{definition}

Throughout the paper we assume the following condition holds.

\begin{assumption}
In problem $(P)$, for the optimal solution point $(\bar{x},\bar{y},\bar{z})$, $\bar{y}\neq 0$.
\label{assumption:y != 0}
\end{assumption}

%We note $h^*(w)=\delta_{B_2^1}(w)$, so by \eqref{eq:KKT}, for the solution point $(\bar{u},\bar{v}_{E},\bar{v}_{I},\bar{w},\bar{s},\bar{x},\bar{y},\bar{z})$ we have $\bar{w}=\Pi_{B_2^1}(\bar{w}+\bar{y})$. From Assumption \ref{assumption:y != 0}, we know $\norm{\bar{w}}=\norm{\bar{u}}=1$.

For the problem we consider in this paper, we define a function $f:\,\mathbb{R}^{n}\times\mathbb{R}^{m}\times\mathbb{R}^{m_I}\rightarrow\overline{\mathbb{R}}$ as below
\begin{eqnarray*}
f(x,y,z):=h(y)+p(x)+\delta_{\{0\}}(\mathcal{A}x-b)+\delta_{\{0\}}(\cB_{E} x-c_{E})+\delta_{\{0\}}(\cB_{I} x-c_{I}+z)+\delta_{\mathbb{R}^{m_I}_{-}}(z),
\end{eqnarray*}
then we can define the operators $\mathcal{T}_{f}$ and $\mathcal{T}_{l}$ related to the closed proper convex function $f$ in $(P)$ and the convex-concave function $l$ in (\ref{lagrangian function}), respectively by
\begin{eqnarray*}
\begin{aligned}
&\mathcal{T}_{f}(x,y,z):=\partial f(x,y,z),\quad \mathcal{T}_{l}(u,v_{E},v_{I},w,s,x,y,z):=\Big\{(u',v_{E}',v_{I}',w',s',x',y',z')\,|\,(u',v_{E}',\\
&v_{I}',w',s',-x',-y',-z')\in \partial l(u,v_{E},v_{I},w,s,x,y,z)\Big\},
\end{aligned}
\end{eqnarray*}
with their inverses given by
\begin{eqnarray*}
\begin{aligned}
&\mathcal{T}_{f}^{-1}(x',y',z'):=\partial f^{*}(x',y',z'),\quad \mathcal{T}_{l}^{-1}(u',v_{E}',v_{I}',w',s',x',y',z'):=\Big\{(u,v_{E},v_{I},w,s,x,y,z)\\
&\,|\,(u',v_{E}',v_{I}',w',s',-x',-y',-z')\in \partial l(u,v_{E},v_{I},w,s,x,y,z)\Big\}.
\end{aligned}
\end{eqnarray*}

Note that $\mathcal{T}_{f}$ is a maximal monotone operator in $\mathbb{R}^{n}\times\mathbb{R}^{m}\times\mathbb{R}^{m_I}$ (see e.g., \cite{Minty64,Moreau65}) and $\mathcal{T}_{l}$ is also a maximal monotone operator in $\mathbb{R}^{m}\times\mathbb{R}^{m_E}\times\mathbb{R}^{m_I}\times\mathbb{R}^{m}\times\mathbb{R}^{n}\times\mathbb{R}^{n}\times\mathbb{R}^{m}\times\mathbb{R}^{m_I}$ due to Corollary 37.5.2 of \cite{Rockafellar70}.

\section{The generalized Jacobians of $\mbox{Prox}_{\sigma h}(\cdot), \mbox{Prox}_{\sigma p}(\cdot)$ and $\Pi_{\mathbb{R}^{m_I}_{+}}(\cdot)$}
\label{sec:Generalized Jacobian}
For the convenience of the subsequent statement, given $\sigma>0$, we need to characterize the structures of the Clarke generalized Jacobians of $\mbox{Prox}_{\sigma h}(\cdot), \mbox{Prox}_{\sigma p}(\cdot)$ and $\Pi_{\mathbb{R}^{m_I}_{+}}(\cdot)$, respectively.

\begin{itemize}
\item $\mbox{Prox}_{\sigma h}(\cdot)$:

For any $u_{h}\in \mathbb{R}^{m}$,
\begin{eqnarray*}
\Pi_{B_{2,m}^{\sigma}}(u_{h}) &=& \left\{\begin{array}{ll}\frac{\sigma u_{h}}{\norm{u_{h}}},&\mbox{if}\,\|u_{h}\|>\sigma,\\
u_{h}, &\mbox{otherwise}.\end{array}\right.
\end{eqnarray*}
%where
%\begin{eqnarray*}
%B_{2}^{1} &:=& \left\{\tilde{u}_{h}\in\mathbb{R}^{m}\,|\,\norm{\tilde{u}_{h}}\leq 1\right\}.
%\end{eqnarray*}
Then
\begin{eqnarray*}
\mbox{Prox}_{\sigma h}(u_{h}) = u_{h} - \Pi_{B_{2,m}^{\sigma}}(u_{h}).
\end{eqnarray*}
We can calculate
\begin{eqnarray*}
\partial\Pi_{B_{2,m}^{\sigma}}(u_{h}) &=& \left\{\begin{array}{ll}\left\{\frac{\sigma}{\norm{u_{h}}}(I-\frac{u_{h}u_{h}^{T}}{\norm{u_{h}}^{2}})\right\},&\mbox{if}\ \norm{u_{h}}>\sigma,\\
\left\{I-\frac{t}{\sigma^{2}} u_{h}u_{h}^{T}\,|\,0\leq t\leq 1\right\},&\mbox{if}\ \norm{u_{h}}=\sigma,\\
\{I\}, &\mbox{if}\ \norm{u_{h}}<\sigma.\end{array}\right.
\end{eqnarray*}
Hence the Clarke generalized Jacobian of $\mbox{Prox}_{\sigma h}(\cdot)$ at $u_{h}$ is
\begin{eqnarray*}
\partial\mbox{Prox}_{\sigma h}(u_{h}) = I - \partial\Pi_{B_{2,m}^{\sigma}}(u_{h}).
\end{eqnarray*}

\item $\mbox{Prox}_{\sigma p}(\cdot)$:

In \cite{ZhangZST}, Zhang et al. have characterized the so-called surrogate generalized Jacobian of $\mbox{Prox}_{\sigma p}(\cdot)$. In order to make the paper self-contained, we also describe it here.

We define a linear operator $\cP:=(\cP_{1},\cP_{2},\ldots,\cP_{J}):\,\mathbb{R}^{n}\rightarrow \mathbb{R}^{n}$, where $\cP_{j}:\,\mathbb{R}^{n}\rightarrow \mathbb{R}^{|G_{j}|}$ is defined by $\cP_{j}x:=x_{G_j}$, $j=1,\ldots,J$. We also define $B_{2}:=B_{2,|G_{1}|}^{\sigma\lambda_{1,1}}\times B_{2,|G_{2}|}^{\sigma\lambda_{1,2}}\times\cdots\times B_{2,|G_{J}|}^{\sigma\lambda_{1,J}}$. For any $u_{p}\in \mathbb{R}^{n}$, by Theorem 4 in \cite{Yu2013}, we have
\begin{eqnarray}
\mbox{Prox}_{\sigma p}(u_{p}) = v - \Pi_{B_{2}}(v),
\label{eq:prox-sig*p}
\end{eqnarray}
where
\begin{eqnarray}
v=\mbox{Prox}_{\sigma p_2}(u_{p})= u_{p} - \Pi_{B_{\infty,n}^{\lambda_2}}(u_{p})=\mbox{sign}(u_{p})\circ \max(|u_{p}|-\sigma\lambda_{2},0),
\label{eq:prox-sig*p2}
\end{eqnarray}
and
\begin{eqnarray*}
\Pi_{B_{2}}(v):=\left(\begin{array}{c} \Pi_{B_{2,|G_{1}|}^{\sigma\lambda_{1,1}}}(\cP_{1}v)\\ \vdots\\ \Pi_{B_{2,|G_{J}|}^{\sigma\lambda_{1,J}}}(\cP_{J}v)\end{array}\right).
\end{eqnarray*}

Furthermore, for any $u_{p}\in\mathcal{R}^{n}$ we are ready to define an alternative for the Clarke generalized Jacobian of $\mbox{Prox}_{\sigma p}(u_{p})$ as below
\begin{eqnarray}
\hat{\partial}\mbox{Prox}_{\sigma p}(u_{p}) &:=& \Big\{(I-\cP^{*}\Sigma\cP)\Theta\,\Big|\,\Sigma=\textrm{Diag}(\Sigma_1,\ldots,\Sigma_J),\Sigma_j\in \partial\Pi_{B_{2,|G_{j}|}^{\sigma\lambda_{1,j}}}(\cP_{j}v),\nn\\
&& j=1,\ldots,J,\,v=\mbox{Prox}_{\sigma p_2}(u_{p}),\,\Theta\in\partial\mbox{Prox}_{\sigma p_2}(u_{p})\Big\}, \label{eq:partial Prox_sigma*p}
\end{eqnarray}
where
%\begin{eqnarray*}
%\Sigma=\textrm{Diag}(\Sigma_1,\ldots,\Sigma_J),\,j=1,\ldots,J,
%\end{eqnarray*}
\begin{eqnarray*}
\Sigma_j &\in& \left\{\begin{array}{ll}\left\{\frac{\sigma\lambda_{1,j}}{\norm{\cP_{j}v}}(I-\frac{(\cP_{j}v)(\cP_{j}v)^{T}}{\norm{\cP_{j}v}^{2}})\right\},&\mbox{if}\,\norm{\cP_{j}v}>\sigma\lambda_{1,j},\\
\left\{I-\frac{t}{(\sigma\lambda_{1,j})^{2}} (\cP_{j}v)(\cP_{j}v)^{T}\,|\,0\leq t\leq 1\right\},&\mbox{if}\,\norm{\cP_{j}v}=\sigma\lambda_{1,j},\\
\{I\}, &\mbox{if}\,\norm{\cP_{j}v}<\sigma\lambda_{1,j},\end{array}\right.
\label{eq:Sigmaj}
\end{eqnarray*}
and
$\Theta=\textrm{Diag}(\theta)$ with
\begin{eqnarray}
\label{eq:theta}
\theta_{i} \in \left\{\begin{array}{ll}\{0\},& \mbox{if}\ |(u_{p})_i| < \sigma\lambda_{2},\\
\left[0,1\right],& \mbox{if}\ |(u_{p})_i| = \sigma\lambda_{2},\\
\{1\}, & \mbox{if}\ |(u_{p})_i| > \sigma\lambda_{2}.\end{array}\right.
\end{eqnarray}

In \cite[Theorem 3.1]{ZhangZST}, Zhang et al. have proved that for any $M\in \hat{\partial}\mbox{Prox}_{\sigma p}(u_{p})$, $M$ is symmetric and positive semidefinite.

\item $\Pi_{\mathbb{R}^{m_I}_{+}}(\cdot)$:

For any $u_{r}\in\mathbb{R}^{m_I}$,
\begin{eqnarray*}
\Pi_{\mathbb{R}^{m_I}_{+}}(u_{r})=\frac{1}{2}(u_{r}+|u_{r}|).
\end{eqnarray*}
Then the Clarke generalized Jacobian of $\Pi_{\mathbb{R}^{m_I}_{+}}(\cdot)$ can be described as
\begin{eqnarray}
\partial\Pi_{\mathbb{R}^{m_I}_{+}}(u_{r}) = \left\{V\in\mathbb{S}^{{m_I}}\,\left|\,V= \textrm{Diag}(\tilde{v}),\,(\tilde{v})_{i}\in\left\{\begin{array}{ll}\{1\},&\mbox{if}\ (u_{r})_{i}>0,\\
\left[0,1\right], & \mbox{if}\ (u_{r})_{i}=0,\\
\{0\}, &\mbox{if}\ (u_{r})_{i}<0.\end{array}\right.\right.\right\}.
\label{eq:Jacobian-Projector-R+}
\end{eqnarray}
\end{itemize}

\section{The SSN-ALM for the dual problem (D)}
\label{sec:ALM}
In this section, we give a brief introduction of the SSN-ALM for the dual problem (D).
For a given $\sigma>0$, the augmented Lagrangian function associated with the dual problem $(D)$ is defined by
\begin{eqnarray*}
L_{\sigma}(u,v_{E},v_{I},w,s;x,y,z)&:=&h^{*}(w)+p^{*}(s)+\inprod{b}{u}+\inprod{c_{E}}{v_{E}}+\inprod{c_{I}}{v_{I}}\\
&&+\frac{\sigma}{2}\|\mathcal{A}^{*}u+\cB^{*}_{E}v_{E}+\cB^{*}_{I}v_{I}+s-\sigma^{-1}x\|^{2}+\frac{\sigma}{2}\|w-u-\sigma^{-1}y\|^{2}\\
&&+\frac{1}{2\sigma}\|\Pi_{\mathbb{R}^{m_I}_{+}}(\sigma v_{I}-z)\|^{2}-\frac{1}{2\sigma}(\|x\|^{2}+\|y\|^{2}+\|z\|^{2}).
\end{eqnarray*}
For each inner subproblem of the ALM, the SSN method will be applied to obtain an inexact solution.

\bigskip
\ni\fbox{\parbox{\textwidth}{\noindent{\bf Algorithm SSN-ALM:} Let $\sigma_{0}>0$ be a given parameter. Select an initial point $(u^{0},v_{E}^{0},v_{I}^{0},w^{0},s^{0},x^{0},y^{0},z^{0})\in \mathbb{R}^{m}\times\mathbb{R}^{m_E}\times\mathbb{R}^{m_I}\times\mathbb{R}^{m}\times\mathbb{R}^{n}\times \mathbb{R}^{n}\times\mathbb{R}^{m}\times\mathbb{R}^{m_I}$. For $k=0,1,\ldots$, iterate the following steps.
\begin{description}
\item [Step 1.] Apply Algorithm SSN to compute
\begin{eqnarray}\label{eq:subproblem of alm}
(u^{k+1},v_{E}^{k+1},v_{I}^{k+1},w^{k+1},s^{k+1})&\approx&\mathop{\mbox{argmin}}_{u,v_{E},v_{I},w,s}\Big\{\Phi_{k}(u,v_{E},v_{I},w,s)\nn\\
&& :=L_{\sigma_k}(u,v_{E},v_{I},w,s;x^{k},y^{k},z^{k})\Big\}.
\end{eqnarray}

\item [Step 2.] Compute
\begin{eqnarray*}
\left\{\begin{array}{l}
x^{k+1}=x^{k}-\sigma_{k}(\mathcal{A}^{*}u^{k+1}+\cB_{E}^{*}v_{E}^{k+1}+\cB_{I}^{*}v_{I}^{k+1}+s^{k+1}),\\
y^{k+1}=y^{k}-\sigma_{k}(w^{k+1}-u^{k+1}),\\
z^{k+1}=-\Pi_{\mathbb{R}^{m_I}_{+}}(\sigma_k v_{I}^{k+1}-z^{k}).
\end{array}\right.
\end{eqnarray*}
Update $\sigma_{k+1}=\rho\sigma_{k}$ for some $\rho\geq 1$.
\end{description}}}

\bigskip

In the following we go to the details of how to solve the subproblem \eqref{eq:subproblem of alm}. For simplicity, we omit the superscript or subscript $k$.
Given $(x,y,z)\in \mathbb{R}^{n}\times\mathbb{R}^{m}\times\mathbb{R}^{m_I}$ and $\sigma>0$, for any $(u,v_{E},v_{I})\in \mathbb{R}^{m}\times\mathbb{R}^{m_E}\times\mathbb{R}^{m_I}$, we define
\begin{eqnarray*}
\varphi(u,v_{E},v_{I}) &:=& \inf_{w\in \mathbb{R}^{m},s\in\mathbb{R}^{n}}\Phi(u,v_{E},v_{I},w,s)=h^{*}(\textrm{Prox}_{h^{*}/\sigma}(\sigma^{-1}y+u))  \\
&&+\frac{1}{2\sigma}\norm{\textrm{Prox}_{\sigma h}(y+\sigma u)}^{2} +  p^{*}(\textrm{Prox}_{p^{*}/\sigma}(\sigma^{-1}x-\cA^{*}u-\cB_{E}^{*}v_{E}-\cB_{I}^{*}v_{I}))
\\
&&+\frac{1}{2\sigma}\norm{\textrm{Prox}_{\sigma p}(x-\sigma
(\cA^{*}u+\cB_{E}^{*}v_{E}+\cB_{I}^{*}v_{I}))}^{2}+\frac{1}{2\sigma}\|\Pi_{\mathbb{R}^{m_I}_{+}}(\sigma v_{I}-z)\|^{2}\\
&&+\inprod{b}{u}+\inprod{c_E}{v_E}+\inprod{c_I}{v_I}- \frac{1}{2\sigma}(\norm{x}^{2}+\norm{y}^{2}+\norm{z}^{2}).
\end{eqnarray*}
By Danskin's Theorem \cite{Danskin1966}, $\varphi$ is a continuously differentiable function with its gradient being Lipschitz continuous. We need to compute the solution $(\bar{u},\bar{v}_{E},\bar{v}_{I})$ of the following nonlinear system of equations
\begin{eqnarray*}
\nabla\varphi(u,v_{E},v_{I})=\left(\begin{array}{c}\textrm{Prox}_{\sigma h}(y+\sigma u) - \cA
\textrm{Prox}_{\sigma p}(x-\sigma (\cA^{*}u+\cB_{E}^{*}v_{E}+\cB_{I}^{*}v_{I}))+b\\-\cB_{E}\textrm{Prox}_{\sigma p}(x-\sigma (\cA^{*}u+\cB_{E}^{*}v_{E}+\cB_{I}^{*}v_{I}))+c_E\\-\cB_{I}\textrm{Prox}_{\sigma p}(x-\sigma (\cA^{*}u+\cB_{E}^{*}v_{E}+\cB_{I}^{*}v_{I}))+\Pi_{\mathbb{R}^{m_I}_{+}}(\sigma v_{I}-z)+c_I\end{array}\right)=0.
\end{eqnarray*}
Then we can obtain that $\bar{s}=\mbox{Prox}_{p^{*}/\sigma}(\sigma^{-1}x-\cA^{*}\bar{u}-\cB_E^{*}\bar{v}_E-\cB_I^{*}\bar{v}_I)$ and $\bar{w}=\mbox{Prox}_{h^{*}/\sigma}(\sigma^{-1}y+\bar{u})$.
Since $\mbox{Prox}_{\sigma h}(\cdot)$ and $\mbox{Prox}_{\sigma p}(\cdot)$ are both Lipschitz continuous functions, we define
\begin{eqnarray*}
\hat{\partial}^{2}\varphi(u,v_{E},v_{I}) &:=& \sigma\left(\begin{array}{ccc}\partial\textrm{Prox}_{\sigma h}(y+\sigma u)& & \\ &0& \\ & &\partial\Pi_{\mathbb{R}^{m_I}_{+}}(\sigma v_{I}-z)\end{array}\right)
\\
&&+\sigma \left(\begin{array}{c}\cA\\\cB_E\\ \cB_I\end{array}\right) \hat{\partial}\textrm{Prox}_{\sigma p}(x-\sigma (\cA^{*}u+\cB_{E}^{*}v_{E}+\cB_{I}^{*}v_{I}))\left(\begin{array}{ccc}\cA^{*} & \cB_E^{*} & \cB_I^{*}\end{array}\right),
\end{eqnarray*}
where $\partial\textrm{Prox}_{\sigma h}(y+\sigma u), \partial\Pi_{\mathbb{R}^{m_I}_{+}}(\sigma v_{I}-z)$ and
$\hat{\partial}\textrm{Prox}_{\sigma p}(x-\sigma (\cA^{*}u+\cB_{E}^{*}v_{E}+\cB_{I}^{*}v_{I}))$ are the generalized Jacobians of $\textrm{Prox}_{\sigma h}(\cdot), \Pi_{\mathbb{R}^{m_I}_{+}}(\cdot)$ and
$\textrm{Prox}_{\sigma p}(\cdot)$ at $y+\sigma u, \sigma v_{I}-z$ and $x-\sigma (\cA^{*}u+\cB_{E}^{*}v_{E}+\cB_{I}^{*}v_{I})$, respectively.
%Due to Proposition 2.3.3 and Theorem 2.6.6 of
It is known from \cite{HiriartSN1984} that
\begin{eqnarray*}
\partial^{2}\varphi(u,v_{E},v_{I})(d)=\hat{\partial}^{2}\varphi(u,v_{E},v_{I})(d),\quad
\forall\, d:=\left(\begin{array}{c} d_{1}\\ d_{2}\\ d_{3}\end{array}\right)\in \mathbb{R}^{m+m_{E}+m_{I}},
\end{eqnarray*}
where $\partial^{2}\varphi(u,v_{E},v_{I})$ is the generalized
Hessian of $\varphi$ at $(u,v_{E},v_{I})$. Let $V_1\in\partial\mbox{Prox}_{\sigma h}(y+\sigma u), V_2\in\hat{\partial}\mbox{Prox}_{\sigma p}(x-\sigma(\cA^{*}u+\cB_{E}^{*}v_{E}+\cB_{I}^{*}v_{I}))$ and $V_3\in\partial\Pi_{\mathbb{R}^{m_I}_{+}}(\sigma v_{I}-z)$. Then we
define
\begin{eqnarray}
H &:=& \sigma\left(\begin{array}{ccc} V_1& & \\ &0& \\ & & V_3\end{array}\right)
+ \sigma \left(\begin{array}{c}\cA\\\cB_E\\ \cB_I\end{array}\right) V_2 \left(\begin{array}{ccc}\cA^{*} & \cB_E^{*} & \cB_I^{*}\end{array}\right)\in\hat{\partial}^{2}\varphi(u,v_{E},v_{I}).\label{eq:H-Jacobian}
\end{eqnarray}

If we can guarantee that every element $H\in\hat{\partial}^{2}\varphi(u,v_{E},v_{I})$ is positive definite, then we can apply the
SSN method to get an approximate solution of $(\bar{u},\bar{v}_{E},\bar{v}_{I})$. We discuss this topic in Section \ref{sec:Theoretical results}.

Now we present the detailed algorithm of the SSN as below.

\bigskip

\ni\fbox{\parbox{\textwidth}{\noindent{\bf Algorithm SSN:
%(SSN($x,y,z,\sigma$)):
\label{alg:Newton-CG1}} Given $(x,y,z)\in \mathbb{R}^{n}\times\mathbb{R}^{m}\times\mathbb{R}^{m_I}$, $\sigma>0$,
$\mu\in(0,\frac{1}{2}), \ \overline{\eta}\in(0,1), \tau\in(0,1],
\nu_{1},\nu_{2}\in(0,1), \textrm{
  and }
  \delta\in(0,1)$, choose $(u^{0},v_{E}^{0},v_{I}^{0},w^{0},s^{0})\in \mathbb{R}^{m}\times\mathbb{R}^{m_E}\times\mathbb{R}^{m_I}\times\mathbb{R}^{m}\times\mathbb{R}^{n}$. Set $j=0$ and iterate the
following steps.
\begin{description}
\item [Step 1.] Choose $H^{j}$ in the form of \eqref{eq:H-Jacobian} with $V_1\in\partial\mbox{Prox}_{\sigma h}(y+\sigma u^{j}), V_2\in\hat{\partial}\mbox{Prox}_{\sigma p}(x-\sigma(\cA^{*}u^{j}+\cB_{E}^{*}v_{E}^{j}+\cB_{I}^{*}v_{I}^{j}))$ and $V_3\in\partial\Pi_{\mathbb{R}^{m_I}_{+}}(\sigma v_{I}^{j}-z)$.
Find the exact solution $(\Delta u^{j},\Delta v_E^{j},\Delta v_I^{j})$ or apply the preconditioned conjugate gradient (PCG) method to find an
approximate solution $(\Delta u^{j},\Delta v_E^{j},\Delta v_I^{j})$ to
\begin{eqnarray*}
(H^{j}+\varepsilon_{j}I)(\Delta u,\Delta v_E,\Delta v_I)=-\nabla\varphi(u^{j},v_E^{j},v_I^{j}),
\end{eqnarray*}
such that
\begin{eqnarray*}
\|H^{j}(\Delta u^{j},\Delta v_E^{j},\Delta v_I^{j})+\nabla\varphi(u^{j},v_E^{j},v_I^{j})\|\leq
\eta_{j}:=\min(\overline{\eta},\|\nabla\varphi(u^{j},v_E^{j},v_I^{j})\|^{1+\tau}),\label{ineq:stopping
criterion 1}
\end{eqnarray*}
where
$\varepsilon_{j}:=\nu_{1}\min\left\{\nu_{2},\|\nabla\varphi(u^{j},v_E^{j},v_I^{j})\|\right\}$.
\item [Step 2.]  Set $\alpha_{j}=\delta^{m_{j}}$, where $m_{j}$ is the first nonnegative integer $m$ for which
\begin{eqnarray*}
\varphi((u^{j},v_E^{j},v_I^{j})+\delta^{m}(\Delta u^{j},\Delta v_E^{j},\Delta v_I^{j}))&\leq&
\varphi(u^{j},v_E^{j},v_I^{j})\\&&+\mu\delta^{m}\langle\nabla
\varphi(u^{j}),(\Delta u^{j},\Delta v_E^{j},\Delta v_I^{j})\rangle.
\end{eqnarray*}
\item [Step 3.]  Set $(u^{j+1},v_E^{j+1},v_I^{j+1})=(u^{j},v_E^{j},v_I^{j})+\alpha_{j}(\Delta u^{j},\Delta v_E^{j},\Delta v_I^{j})$.
\end{description}}}

\bigskip

\section{Theoretical results}
\label{sec:Theoretical results}

In this section we shall present some theoretical results related to the SSN-ALM. Firstly, we must guarantee that the generalized Jacobian is positive definite when using the SSN method. However, the positive definiteness of the generalized Jacobian is very essential, so we shall establish an equivalent condition to characterize the positive definiteness in Subsection \ref{subsec:Perturbation analysis results}. Secondly, we analyze the strong semismoothness of the involved function in the inner problem and the local convergence rate of the SSN method in Subsection \ref{subsec:Local convergence rate of the SSN}. Thirdly, we need to analyze the global convergence and local convergence rate of the ALM in Subsection \ref{subsec:Convergence analysis of the ALM}.

\subsection{Constraint nondegeneracy and the positive definiteness of $\hat{\partial}^{2}\varphi(\bar{u},\bar{v}_{E},\bar{v}_{I})$}
\label{subsec:Perturbation analysis results}

By introducing a variable $\eta\in\mathbb{R}$, the primal problem $(P)$ can be reformulated as
%\begin{eqnarray}\label{primal problem p''}
%&&\min_{{x\in\mathcal{X},y\in\mathcal{Y}}\atop{t\in\mathbb{R}}}\ t\\
%&&\mbox{s.t.}\ \mathcal{A}x-y=b,\nonumber\\
%&&\quad (x,y,t)\in K_{h,p},\nonumber
%\end{eqnarray}
\begin{eqnarray*}
(P') & \max\limits_{x\in\mathbb{R}^{n},y\in\mathbb{R}^{m},z\in\mathbb{R}^{m_I},\eta\in\mathbb{R}}\Big\{-(h(y)+\eta)\ \Big|\,\mathcal{A}x-y=b,\,\cB_{E} x-c_{E}=0,\,\cB_{I} x-c_{I}+z=0,\\
& z\in\mathbb{R}^{m_I}_{-},\,(x,\eta)\in \textrm{epi}p\Big\}.
\label{primal problem P'}
\end{eqnarray*}
The dual of problem $(P')$ is
\begin{eqnarray*}
(D')& \min\limits_{u,w\in \mathbb{R}^{m},s\in\mathbb{R}^{n},v_{E}\in\mathbb{R}^{m_E},v_{I},\hat{v}_{I}\in\mathbb{R}^{m_I},\xi\in\mathbb{R}}\Big\{h^{*}(w)+\xi+\inprod{b}{u}+\inprod{c_E}{v_E}+\inprod{c_I}{v_I}\, \Big|\\
& \cA^{*}u + \cB_{E}^{*}v_{E} + \cB_{I}^{*}v_{I} + s=0,\,-u+w=0,\,v_{I}-\hat{v}_{I}=0,\,\hat{v}_{I}\in\mathbb{R}^{m_I}_{-},\,(s,\xi)\in \textrm{epi}p^{*}\Big\}.
\label{dual problem D'}
\end{eqnarray*}

We say that $(x',y',z',\eta')$ is a feasible solution to problem $(P')$ if
\begin{eqnarray*}
(x',y',z',\eta')\in\cF_{P} &:=& \Big\{(x,y,z,\eta)\in \mathbb{R}^{n}\times \mathbb{R}^{m}\times \mathbb{R}^{m_I}\times \mathbb{R}\,\Big|\,\mathcal{A}x-y=b,\cB_{E} x-c_{E}=0,\\
&&\cB_{I} x-c_{I}+z=0,\,z\in\mathbb{R}^{m_I}_{-},\,(x,\eta)\in \textrm{epi}p\Big\},
\label{eq:primal feasible set}
\end{eqnarray*}
and $(u',v_{E}',v_{I}',\hat{v}_{I}',w',s',\xi')$ is a feasible solution to problem $(D')$ if
\begin{eqnarray*}
(u',v_{E}',v_{I}',\hat{v}_{I}',w',s',\xi')\in\cF_{D} &:=& \Big\{(u,v_{E},v_{I},\hat{v}_{I},w,s,\xi)\in \mathbb{R}^{m}\times\mathbb{R}^{m_{E}}\times\mathbb{R}^{m_{I}}\times\mathbb{R}^{m_{I}}\times \mathbb{R}^{m}\times\\
&& \mathbb{R}^{n}\times \mathbb{R}\,\Big|\, w\in\mbox{dom}h^{*},\
\cA^{*}u + \cB_{E}^{*}v_{E} + \cB_{I}^{*}v_{I} + s=0,\\
&&-u+w=0,\,v_{I}-\hat{v}_{I}=0,\,\hat{v}_{I}\in\mathbb{R}^{m_I}_{-},\,(s,\xi)\in \textrm{epi}p^{*}\Big\}.
\label{eq:dual feasible set}
\end{eqnarray*}

In order to guarantee the existence of the primal and dual solutions, we assume the following two conditions hold.
\begin{assumption}
Problem $(P')$ satisfies the condition:
\begin{eqnarray*}
&\exists\, (x^{0},y^{0},z^{0},\eta^{0})\in\mathbb{R}^{n}\times\mathbb{R}^{m}\times\mathbb{R}^{m_I}\times\mathbb{R},\textrm{ such that } \mathcal{A}x^{0}-y^{0}=b,&\\
&\cB_{E}x^{0}-c_{E}=0,\,\cB_{I} x^{0}-c_{I}+z^{0}=0,\,z^{0}\in\mathbb{R}^{m_I}_{--},\,(x^{0},\eta^{0})\in \textrm{int}(\textrm{epi}p).&
\end{eqnarray*}
\label{assumption:Slater-P}
\end{assumption}
\begin{assumption}
Problem $(D')$ satisfies the condition:
\begin{eqnarray*}
&\exists\, (u^{0},v_{E}^{0},v_{I}^{0},\hat{v}_{I}^{0},w^{0},s^{0},\xi^{0})\in\mathbb{R}^{m}\times\mathbb{R}^{m_E}\times\mathbb{R}^{m_I}\times\mathbb{R}^{m_I}\times\mathbb{R}^{m}\times\mathbb{R}^{n}\times\mathbb{R},&\\ &\textrm{such that } w^{0}\in\mbox{int}(\mbox{dom}h^{*}),\,
\mathcal{A}^{*}u^{0} + \cB^{*}_{E} v_{E}^{0} + \cB^{*}_{I} v_{I}^{0} + s^{0}=0,\, -u^{0}+w^{0}=0,\,v_{I}^{0}-\hat{v}_{I}^{0}=0,&\\
&\hat{v}_{I}^{0}\in\mathbb{R}^{m_I}_{--},\,(s^{0},\xi^{0})\in \textrm{int}(\textrm{epi}p^{*}).&
\end{eqnarray*}
\label{assumption:Slater-D}
\end{assumption}

Since Assumption \ref{assumption:y != 0} holds, the constraint nondegeneracy condition for problem $(P')$ at the primal solution $(\bar{x},\bar{y},\bar{z},\bar{\eta})$ takes the following form
\begin{eqnarray}
\cA \mathbb{R}^{n} - \mathbb{R}^{m} &=& \mathbb{R}^{m},\nn\\
\cB_{E} \mathbb{R}^{n} &=& \mathbb{R}^{m_E},\nn\\
\cB_{I} \mathbb{R}^{n} + \mathbb{R}^{m_I} &=& \mathbb{R}^{m_I},\nn\\
\mathbb{R}^{m_I} + \textrm{lin}(T_{\mathbb{R}^{m_I}_{-}}(\bar{z})) &=& \mathbb{R}^{m_I},\nn\\
(\cI\quad \cI)(\mathbb{R}^{n}\times\mathbb{R}) + \textrm{lin}(T_{\textrm{epi}p}(\bar{x},\bar{\eta})) &=& \mathbb{R}^{n}\times\mathbb{R}.
\label{eq:constraint nondegeneracy condition}
\end{eqnarray}
For any $z\in\mathbb{R}^{m_I}$,
\begin{eqnarray*}
T_{\mathbb{R}^{m_I}_{-}}(z)=\Big\{\hat{d}\in\mathbb{R}^{m_I}\,\Big|\,\hat{d}_{i}\leq 0,\textrm{ if }z_i=0;\,\,\hat{d}_{i}\in\mathbb{R},\textrm{ if }z_i<0\Big\}
\end{eqnarray*}
with its lineality space as
\begin{eqnarray*}
\textrm{lin}(T_{\mathbb{R}^{m_I}_{-}}(z))=\Big\{\hat{d}\in\mathbb{R}^{m_I}\,\Big|\,\hat{d}_{i}=0,\textrm{ if }z_i=0;\,\,\hat{d}_{i}\in\mathbb{R},\textrm{ if }z_i<0\Big\}.\label{eq:lineality of ineq constr}
\end{eqnarray*}
%For any $x\in\mathbb{R}^{n}$,
%\begin{eqnarray*}
%T_{\mathbb{R}^{m_I}_{+}}(\cB_{I}\bar{x}-c)=\Big\{\hat{d}\in\mathbb{R}^{m_I}\,|\,\hat{d}_{i}\geq 0,\textrm{ if }(\cB_{I}\bar{x}-c)_i=0;\,\,\hat{d}_{i}\in\mathbb{R},\textrm{ if }(\cB_{I}\bar{x}-c)_i>0\Big\}.
%\end{eqnarray*}
%The linearity space of $\textrm{lin}(T_{\mathbb{R}^{m_I}_{+}}(\cB_{I}\bar{x}-c))$ is
%\begin{eqnarray*}
%\textrm{lin}(T_{\mathbb{R}^{m_I}_{+}}(\cB_{I}\bar{x}-c))=\Big\{\hat{d}\in\mathbb{R}^{m_I}\,|\,\hat{d}_{i}=0,\textrm{ if }(\cB_{I}\bar{x}-c)_i=0;\,\,\hat{d}_{i}\in\mathbb{R},\textrm{ if }(\cB_{I}\bar{x}-c)_i>0\Big\}.\label{eq:lineality of ineq constr}
%\end{eqnarray*}
For any $(x,\eta)\in\mathbb{R}^{n}\times\mathbb{R}$,
\begin{eqnarray*}
T_{\textrm{epi}p}(x,\eta)=\left\{\begin{array}{ll}\mathbb{R}^{n}\times\mathbb{R},& \mbox{if}\quad (x,\eta)\in\mbox{int}(\textrm{epi}p),\\
\textrm{epi}p,& \mbox{if}\quad (x,\eta)=(0,0),\\
\left\{\left.(dx,d\eta)\in\mathbb{R}^{n}\times\mathbb{R}\ \right|\ p'(x;dx)-d\eta\leq 0\right\},&\mbox{if}\quad(x,\eta)\in\partial (\textrm{epi}p)\setminus \{(0,0)\},\end{array}\right.
\end{eqnarray*}
where $p'(x;dx)$ is the directional derivative of $p$ at $x$ in the direction $dx$.
The lineality space of $T_{\textrm{epi}p}(x,\eta)$ is
\begin{eqnarray*}
&&\mbox{lin}(T_{\textrm{epi}p}(x,\eta))\\
&=&T_{\textrm{epi}p}(x,\eta)\cap -T_{\textrm{epi}p}(x,\eta)\\
&=&\left\{\begin{array}{ll}\mathbb{R}^{n}\times\mathbb{R},&\mbox{if}\quad(x,\eta)\in\mbox{int}(\textrm{epi}p),\\
\{(0,0)\},& \mbox{if}\quad (x,\eta)=(0,0),\\
\{(dx,d\eta)\in\mathbb{R}^{n}\times\mathbb{R}\,|\, p'(x;dx)\leq d\eta\leq -p'(x;-dx)\},&\mbox{if}\quad(x,\eta)\in\partial (\textrm{epi}p)\setminus \{(0,0)\}.\end{array}\right.
\end{eqnarray*}
%Since $p(x)=\lambda(1-\alpha)\sum\limits_{j=1}^{J}\omega_{j}\|x^{j}\|_{q}+\lambda\alpha\|x\|_{1}$, we have that
%\begin{eqnarray*}
%p^{\downarrow}(x,dx)&=&\lambda(1-\alpha)\left(\sum_{j:x^{j}\neq 0}\omega_{j}\left(\frac{(x^{j})^{q-1}}{\|x^{j}\|_{q}^{q-1}}\right)^{T}dx^{j}+\sum_{j:x^{j}=0}\omega_{j}\|dx^{j}\|_{q}\right)\\
%&&+\lambda\alpha\left(\sum_{i:x_{i}\neq 0}\mbox{sign}(x_{i})dx_{i}+\sum_{i:x_{i}=0}|dx_{i}|\right).
%\end{eqnarray*}
Since $p(x)=\lambda_1\sum\limits_{j=1}^{J}\omega_{j}\|x_{G_j}\|+\lambda_2\|x\|_{1}$, we have that
\begin{eqnarray*}
p'(x;dx)&=&\lambda_1\left(\sum_{j:x_{G_j}\neq 0}\omega_{j}\left(\frac{x_{G_j}}{\|x_{G_j}\|}\right)^{T}(dx)_{G_j}+\sum_{j:x_{G_j}=0}\omega_{j}\|(dx)_{G_j}\|\right)\\
&&+\lambda_2\left(\sum_{i:x_{i}\neq 0}\mbox{sign}(x_{i})(dx)_{i}+\sum_{i:x_{i}=0}|(dx)_{i}|\right).
\end{eqnarray*}
%Meanwhile, due to the structure of $h(y)=\|y\|$, we have
%\begin{eqnarray*}
%h'(y;dy)=\left\{\begin{array}{ll}\frac{y^{T}dy}{\|y\|},&\mbox{if}\quad y\neq0,\\
%\|dy\|,&\mbox{if}\quad y=0.\end{array}\right.
%\end{eqnarray*}
Therefore, we can describe the lineality space of $T_{\textrm{epi}p}(x,\eta)$ as
\begin{eqnarray*}
&&\mbox{lin}(T_{\textrm{epi}p}(x,\eta))\\
&=&\left\{\begin{array}{ll}\mathbb{R}^{n}\times\mathbb{R},&\mbox{if}\quad(x,\eta)\in\mbox{int}(\textrm{epi}p),\\
\{(0,0)\},& \mbox{if}\quad (x,\eta)=(0,0),\\
\{(dx,d\eta)\in\mathbb{R}^{n}\times\mathbb{R}\,|\, p'(x;dx)=-p'(x;-dx)=d\eta, &\\
  (dx)_{i}=0\textrm{ if } x_{i}=0\},&\mbox{if}\quad(x,\eta)\in\partial (\textrm{epi}p)\setminus \{(0,0)\}.\end{array}\right.
\end{eqnarray*}
Since at the solution point the constraint must be active, which means $(\bar{x},\bar{\eta})\in\partial (\textrm{epi}p)\setminus \{(0,0)\}$, we define a linear subspace $\textrm{T}^{\textrm{lin}}(x)\subseteq \mathbb{R}^{n}$ by
\begin{eqnarray}
\textrm{T}^{\textrm{lin}}(\bar{x}) &=&
\Big\{dx\in\mathbb{R}^{n}\,\Big|\,(dx)_{i}=0\textrm{ if } \bar{x}_{i}=0\Big\}.\label{eq:T^lin}
\end{eqnarray}
%Similar to the proof of \cite[Proposition 4.3]{Ding2017}, we can prove the following result.
%\begin{proposition}
%The condition
%\begin{eqnarray*}
%(\cI\quad \cI)(\mathbb{R}^{n}\times\mathbb{R}) + \textrm{lin}(T_{\textrm{epi}p}(\bar{x},\bar{\eta})) &=& \mathbb{R}^{n}\times\mathbb{R}
%\end{eqnarray*}
%is equivalent to
%\begin{eqnarray*}
%\mathbb{R}^{n} + \textrm{T}^{\textrm{lin}}(\bar{x}) &=& \mathbb{R}^{n}.
%\end{eqnarray*}
%\label{prop:constraint nondegeneracy}
%\end{proposition}
%So based on Proposition \ref{prop:constraint nondegeneracy},
Then \eqref{eq:constraint nondegeneracy condition} is equivalent to
\begin{eqnarray}
\cB_{E}\textrm{T}^{\textrm{lin}}(\bar{x})&=&\mathbb{R}^{m_E},\nn\\
\cB_{I}\textrm{T}^{\textrm{lin}}(\bar{x}) + \textrm{lin}(T_{\mathbb{R}^{m_I}_{-}}(\bar{z}))&=&\mathbb{R}^{m_I}.
\label{eq:constraint nondegeneracy condition2}
\end{eqnarray}
%where
%\begin{eqnarray*}
%T_{\mathbb{R}^{m_I}_{+}}(\cB_{I}\bar{x}-c)=\Big\{\hat{d}\in\mathbb{R}^{m_I}\,\Big|\,\hat{d}_{i}\geq 0,\textrm{ if }(\cB_{I}\bar{x}-c)_i=0;\,\,\hat{d}_{i}\in\mathbb{R},\textrm{ if }(\cB_{I}\bar{x}-c)_i>0\Big\}.
%\end{eqnarray*}
%The \blue{linearity} space of $\textrm{lin}(T_{\mathbb{R}^{m_I}_{+}}(\cB_{I}\bar{x}-c))$ is
%\begin{eqnarray*}
%\textrm{lin}(T_{\mathbb{R}^{m_I}_{+}}(\cB_{I}\bar{x}-c))=\Big\{\hat{d}\in\mathbb{R}^{m_I}\,\Big|\,\hat{d}_{i}=0,\textrm{ if }(\cB_{I}\bar{x}-c)_i=0;\,\,\hat{d}_{i}\in\mathbb{R},\textrm{ if }(\cB_{I}\bar{x}-c)_i>0\Big\}.\label{eq:lineality of ineq constr}
%\end{eqnarray*}

Based on Assumption \ref{assumption:y != 0}, $\partial\textrm{Prox}_{\sigma h}(\bar{y}+\sigma \bar{u})$ is a singleton set $\{V_1\}$ with $V_{1}$ positive definite. Then we define an element $\widehat{H}\in\hat{\partial}^{2}\varphi(\bar{u},\bar{v}_{E},\bar{v}_{I})$
with
\begin{eqnarray}
\widehat{H} &:=& \sigma\left(\begin{array}{ccc} V_1& & \\ &0& \\ & & V^{0}_3\end{array}\right)
+ \sigma \left(\begin{array}{c}\cA\\\cB_E\\ \cB_I\end{array}\right) V^{0}_2 \left(\begin{array}{ccc}\cA^{*} & \cB_E^{*} & \cB_I^{*}\end{array}\right),
\end{eqnarray}
where
\begin{eqnarray}
V_{2}^{0}:=(I_{n}-\cP^{*}\Sigma\cP)\Theta,
\label{eq:V_2^0}
\end{eqnarray}
here
\begin{eqnarray*}
\Sigma&=&\textrm{Diag}(\Sigma_1,\ldots,\Sigma_J),\,j=1,\ldots,J,\\
\Sigma_j &=& \left\{\begin{array}{ll}\frac{\sigma\lambda_{1,j}}{\norm{\cP_{j}v}}(I-\frac{(\cP_{j}v)(\cP_{j}v)^{T}}{\norm{\cP_{j}v}^{2}}),&\mbox{if}\,\norm{\cP_{j}v}>\sigma\lambda_{1,j},\\
%I,&\mbox{if}\,\norm{\cP_{j}v}=\sigma\lambda_{1,j},\\
I, &\mbox{if}\,\norm{\cP_{j}v}\leq\sigma\lambda_{1,j},\end{array}\right.
\label{eq:Sigmaj}
\end{eqnarray*}
with $v=\mbox{Prox}_{\sigma p_2}(\bar{x}-\sigma (\cA^{*}\bar{u}+\cB_{E}^{*}\bar{v}_{E}+\cB_{I}^{*}\bar{v}_{I}))$,
$\Theta=\textrm{Diag}(\theta)$ with
\begin{eqnarray*}
\theta_{i} = \left\{\begin{array}{ll}1, & \mbox{if}\ |(\bar{x}-\sigma (\cA^{*}\bar{u}+\cB_{E}^{*}\bar{v}_{E}+\cB_{I}^{*}\bar{v}_{I}))_i| > \sigma\lambda_{2},\\
0,& \mbox{if}\ |(\bar{x}-\sigma (\cA^{*}\bar{u}+\cB_{E}^{*}\bar{v}_{E}+\cB_{I}^{*}\bar{v}_{I}))_i| \leq \sigma\lambda_{2},\end{array}\right.
\end{eqnarray*}
and
\begin{eqnarray}
V_{3}^{0}:=\textrm{Diag}(\tilde{v}_{3}),(\tilde{v}_{3})_{i}=\left\{\begin{array}{ll}1,&\mbox{if}\,(\sigma \bar{v}_{I}-\bar{z})_{i}>0,\\
0, &\mbox{if}\,(\sigma \bar{v}_{I}-\bar{z})_{i}\leq0.\end{array}\right.\label{eq:V_3^0}
\end{eqnarray}

Now we present the result of equivalence.

%and $\tilde{\partial}\textrm{Prox}_{\sigma p}(\bar{x}-\sigma (\cA^{*}\bar{u}+\cB_{E}^{*}\bar{v}_{E}+\cB_{I}^{*}\bar{v}_{I}))$ is defined by
%\begin{eqnarray}
%\tilde{\partial}\mbox{Prox}_{\sigma p}(\bar{x}-\sigma (\cA^{*}\bar{u}+\cB_{E}^{*}\bar{v}_{E}+\cB_{I}^{*}\bar{v}_{I})) = \Big\{(I-\cP^{*}\Sigma\cP)\Theta\,\Big|\,\Sigma=\textrm{Diag}(\Sigma_1,\ldots,\Sigma_J),\nn\\
%\Sigma_j\in \partial\Pi_{B_{2,|G_{j}|}^{\sigma\lambda_{1,j}}}(\cP_{j}\bar{v}),\,j=1,\ldots,J,\,\bar{v}=\mbox{Prox}_{\sigma p_2}(\bar{x}-\sigma (\cA^{*}\bar{u}+\cB_{E}^{*}\bar{v}_{E}+\cB_{I}^{*}\bar{v}_{I}))\Big\},
%\label{eq:partial Prox_sigma*p}
%\end{eqnarray}
%where $\Theta=\textrm{Diag}(\theta)$ and
%\begin{eqnarray*}
%\theta_{i} = \left\{\begin{array}{ll}0,& \mbox{if}\ |(\bar{x}-\sigma (\cA^{*}\bar{u}+\cB_{E}^{*}\bar{v}_{E}+\cB_{I}^{*}\bar{v}_{I}))_i| \leq \sigma\lambda_{2},\\
%1, & \mbox{if}\ |(\bar{x}-\sigma (\cA^{*}\bar{u}+\cB_{E}^{*}\bar{v}_{E}+\cB_{I}^{*}\bar{v}_{I}))_i| > \sigma\lambda_{2}.\end{array}\right.\label{eq:tilde partial Prox_sigma*p}
%\end{eqnarray*}

\begin{theorem}
For problems $(P')$ and $(D')$, the following conditions are equivalent:\\
(i) The primal constraint nondegenaracy condition (\ref{eq:constraint nondegeneracy condition2}) of the primal problem $(P')$ holds at $(\bar{x},\bar{y},\bar{z},\bar{\eta})$.\\
(ii) The elements of $\hat{\partial}^{2}\varphi(\bar{u},\bar{v}_{E},\bar{v}_{I})$ are all positive
definite.\\
(iii) $\widehat{H}$ is positive definite.
\end{theorem}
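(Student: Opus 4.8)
The plan is to establish $(ii)\Leftrightarrow(iii)$ and $(iii)\Leftrightarrow(i)$ separately; the implication $(ii)\Rightarrow(iii)$ is immediate since $\widehat{H}\in\hat{\partial}^{2}\varphi(\bar{u},\bar{v}_{E},\bar{v}_{I})$. The common starting point is the quadratic form: for $d=(d_{1},d_{2},d_{3})$ and any $H$ built as in \eqref{eq:H-Jacobian},
\begin{eqnarray*}
\inprod{d}{Hd}=\sigma\inprod{d_{1}}{V_{1}d_{1}}+\sigma\inprod{d_{3}}{V_{3}d_{3}}+\sigma\inprod{\zeta}{V_{2}\zeta},\qquad \zeta:=\cA^{*}d_{1}+\cB_{E}^{*}d_{2}+\cB_{I}^{*}d_{3}.
\end{eqnarray*}
Assumption \ref{assumption:y != 0} forces $\partial\textrm{Prox}_{\sigma h}(\bar{y}+\sigma\bar{u})=\{V_{1}\}$ with $V_{1}$ positive definite, while $V_{2}\succeq0$ by \cite[Theorem 3.1]{ZhangZST} and $V_{3}\succeq0$ by \eqref{eq:Jacobian-Projector-R+}. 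Hence the form is nonnegative and vanishes exactly when $d_{1}=0$, $V_{3}d_{3}=0$, and $V_{2}\zeta=0$ with $\zeta=\cB_{E}^{*}d_{2}+\cB_{I}^{*}d_{3}$. Thus positive definiteness of any such $H$ is equivalent to the implication ``$d_{3}\in\textrm{Ker}\,V_{3}$ and $\cB_{E}^{*}d_{2}+\cB_{I}^{*}d_{3}\in\textrm{Ker}\,V_{2}$ force $d_{2}=d_{3}=0$.''

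For $(iii)\Rightarrow(ii)$ I would argue that $\widehat{H}$ is the extreme element, i.e. $V_{2}^{0}$ and $V_{3}^{0}$ have the largest kernels among all admissible $V_{2},V_{3}$. Indeed, at the boundary indices ($\norm{\cP_{j}v}=\sigma\lambda_{1,j}$, $|(\cdot)_{i}|=\sigma\lambda_{2}$, and $(\sigma\bar{v}_{I}-\bar{z})_{i}=0$) the definitions \eqref{eq:V_2^0}--\eqref{eq:V_3^0} select the choices of smallest support, so $\textrm{Ker}\,V_{2}\subseteq\textrm{Ker}\,V_{2}^{0}$ and $\textrm{Ker}\,V_{3}\subseteq\textrm{Ker}\,V_{3}^{0}$. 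Consequently, whenever $\inprod{d}{Hd}=0$ we get $d_{1}=0$, $d_{3}\in\textrm{Ker}\,V_{3}\subseteq\textrm{Ker}\,V_{3}^{0}$ and $\zeta\in\textrm{Ker}\,V_{2}\subseteq\textrm{Ker}\,V_{2}^{0}$, hence $\inprod{d}{\widehat{H}d}=0$; if $\widehat{H}$ is positive definite this yields $d=0$, so $H$ is positive definite.

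The core is $(iii)\Leftrightarrow(i)$, which rests on identifying the two kernels with the lineality spaces appearing in \eqref{eq:constraint nondegeneracy condition2}. Using the KKT relations \eqref{eq:KKT} (equivalently the ALM fixed-point identities $\bar{z}=-\Pi_{\mathbb{R}^{m_I}_{+}}(\sigma\bar{v}_{I}-\bar{z})$ and $\bar{x}=\textrm{Prox}_{\sigma p}(\bar{x}-\sigma(\cA^{*}\bar{u}+\cB_{E}^{*}\bar{v}_{E}+\cB_{I}^{*}\bar{v}_{I}))$), complementarity gives $\{i:(\sigma\bar{v}_{I}-\bar{z})_{i}>0\}=\{i:\bar{z}_{i}<0\}$, so $\textrm{Ker}\,V_{3}^{0}=\{d_{3}:(d_{3})_{i}=0\textrm{ if }\bar{z}_{i}<0\}=(\textrm{lin}(T_{\mathbb{R}^{m_I}_{-}}(\bar{z})))^{\perp}$. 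For $V_{2}^{0}=(I-\cP^{*}\Sigma\cP)\Theta$, the soft-thresholding/group-projection structure shows that on inactive groups $V_{2}^{0}=0$, while on active groups $I-\Sigma_{j}$ is nonsingular and $\Theta$ is the projection onto $\{i:\bar{x}_{i}\neq0\}$; this yields $\textrm{Ker}\,V_{2}^{0}=\{\zeta:\zeta_{i}=0\textrm{ if }\bar{x}_{i}\neq0\}=(\textrm{T}^{\textrm{lin}}(\bar{x}))^{\perp}$. Substituting these into the vanishing condition, $\widehat{H}\succ0$ is equivalent to ``$d_{3}\in(\textrm{lin}(T_{\mathbb{R}^{m_I}_{-}}(\bar{z})))^{\perp}$ and $\cB_{E}^{*}d_{2}+\cB_{I}^{*}d_{3}\in(\textrm{T}^{\textrm{lin}}(\bar{x}))^{\perp}$ imply $d_{2}=d_{3}=0$'', which is precisely the injectivity of the adjoint of the map $\textrm{T}^{\textrm{lin}}(\bar{x})\ni t\mapsto(\cB_{E}t,\;\cB_{I}t+\textrm{lin}(T_{\mathbb{R}^{m_I}_{-}}(\bar{z})))$. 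Taking polars, this injectivity is equivalent to surjectivity, i.e. to the nondegeneracy condition \eqref{eq:constraint nondegeneracy condition2}.

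I expect the main obstacle to be the kernel computation $\textrm{Ker}\,V_{2}^{0}=(\textrm{T}^{\textrm{lin}}(\bar{x}))^{\perp}$ in the presence of groups, where the noncommuting factors $\Theta$ and $I-\cP^{*}\Sigma\cP$ must be handled together (using that $V_{2}^{0}$ is symmetric, that $I-\Sigma_{j}$ is invertible exactly on active groups, and that $v$ and $\bar{x}$ share supports on those groups after the KKT identification). A secondary delicate point is that the translation of $(iii)$ into \eqref{eq:constraint nondegeneracy condition2} must be carried out \emph{jointly} in $(d_{2},d_{3})$ rather than as two independent surjectivities, so the polarity argument has to be phrased through the stacked/quotient map above.
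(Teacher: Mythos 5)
Your proposal is correct, and it takes a genuinely different route from the paper's. The paper closes a cycle (i)$\Rightarrow$(ii)$\Rightarrow$(iii)$\Rightarrow$(i): for (i)$\Rightarrow$(ii) it assumes $\langle d,Hd\rangle=0$, deduces $d_1=0$, $V_3d_3=0$ and $V_2(\cB_E^*d_2+\cB_I^*d_3)=0$, derives from the KKT system the orthogonality relations \eqref{eq:d3 perp dhat} and \eqref{eq:d2d3 perp Bdx}, and then invokes nondegeneracy to produce a \emph{single} $dx\in\textrm{T}^{\textrm{lin}}(\bar{x})$ and $\hat{d}\in\textrm{lin}(T_{\mathbb{R}^{m_I}_{-}}(\bar{z}))$ with $\cB_{E}dx=d_2$ and $\cB_{I}dx+\hat{d}=d_3$, whence $\|(d_2,d_3)\|^2=0$; for (iii)$\Rightarrow$(i) it argues by contradiction, manufacturing a null direction of $\widehat{H}$ from a failure of \eqref{eq:constraint nondegeneracy condition2}. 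You never prove (i)$\Rightarrow$(ii) directly: instead you reduce (ii) to (iii) by the kernel-maximality observation $\textrm{Ker}\,V_2\subseteq\textrm{Ker}\,V_2^{0}$, $\textrm{Ker}\,V_3\subseteq\textrm{Ker}\,V_3^{0}$ (correct, and nowhere stated in the paper: at boundary indices the blocks of $\widehat{H}$ vanish, while on active groups $I-\Sigma_j$ is nonsingular so the kernel reduces to that of $\Theta_j$), and you prove (iii)$\Leftrightarrow$(i) by the identifications $\textrm{Ker}\,V_3^{0}=[\textrm{lin}(T_{\mathbb{R}^{m_I}_{-}}(\bar{z}))]^{\perp}$ and $\textrm{Ker}\,V_2^{0}=[\textrm{T}^{\textrm{lin}}(\bar{x})]^{\perp}$ --- exactly the KKT computations the paper performs inside its two implications --- followed by the finite-dimensional duality that a linear map is surjective iff its adjoint is injective. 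Besides isolating the reusable fact that only the extremal element $\widehat{H}$ ever needs to be tested, your formulation via the stacked map $(t,\ell)\mapsto(\cB_{E}t,\,\cB_{I}t+\ell)$ on $\textrm{T}^{\textrm{lin}}(\bar{x})\times\textrm{lin}(T_{\mathbb{R}^{m_I}_{-}}(\bar{z}))$ repairs a real asymmetry in the paper's argument: read literally as two separate surjectivities, \eqref{eq:constraint nondegeneracy condition2} does \emph{not} imply (iii) (in the configuration $n=m_E=m_I=1$, $\cB_E=\cB_I$ the identity, $\bar{x}_1\neq 0$, $\bar{z}=0$, $\bar{v}_I\leq 0$, which the KKT system does not exclude, both displayed equalities hold, yet $d=(0,-1,1)$ gives $\langle d,\widehat{H}d\rangle=0$); the paper's (i)$\Rightarrow$(ii) step silently uses joint solvability (one $dx$ serving both equations), while its (iii)$\Rightarrow$(i) contradiction only recovers the two separate equalities. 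Your proof establishes both directions against the same joint reading of \eqref{eq:constraint nondegeneracy condition2}, which is the reading under which the theorem is actually true, so on this point your treatment is more careful than the paper's.
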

\begin{proof}
``(i)$\Rightarrow$(ii)": For any $V_2\in\hat{\partial}\textrm{Prox}_{\sigma p}(\bar{x}-\sigma (\cA^{*}\bar{u}+\cB_{E}^{*}\bar{v}_{E}+\cB_{I}^{*}\bar{v}_{I}))$, $V_3\in\partial\Pi_{\mathbb{R}^{m_I}_{+}}(\sigma \bar{v}_{I}-\bar{z})$ and $d_{1}\in\mathbb{R}^{m}$, $d_{2}\in\mathbb{R}^{m_{E}}$, $d_{3}\in\mathbb{R}^{m_{I}}$, $d=\left(\begin{array}{c} d_{1}\\ d_{2}\\ d_{3}\end{array}\right)$, we assume
\begin{eqnarray}
0 &=& \inprod{d}{Hd} = \inprod{d_1}{V_1d_1} + \inprod{d_3}{V_3d_3} + \inprod{\cA^*d_1+\cB^*_Ed_2+\cB^*_Id_3}{V_2(\cA^*d_1+\cB^*_Ed_2+\cB^*_Id_3)}\nn\\
&\geq& \inprod{d_1}{V_1d_1} + \inprod{V_3d_3}{V_3d_3} + \inprod{V_2(\cA^*d_1+\cB^*_Ed_2+\cB^*_Id_3)}{V_2(\cA^*d_1+\cB^*_Ed_2+\cB^*_Id_3)}\nn\\
&\geq& 0.\label{eq:dHd}
\end{eqnarray}
The first inequality in \eqref{eq:dHd} holds because all the eigenvalues of $V_2$ and $V_3$ are less than
or equal to one.

Since $V_1$ is positive definite and $V_{2}$, $V_{3}$ are positive semidefinite, we have $d_1=0$, $V_3d_3=0$ and $V_2(\cB^*_Ed_2+\cB^*_Id_3)=0$.
Due to the structure of $\partial\Pi_{\mathbb{R}^{m_I}_{+}}(\sigma \bar{v}_{I}-\bar{z})$, we can see that $V_3d_3=0$ implies
\begin{eqnarray*}
(d_3)_i=0,\textrm{ if }(\sigma\bar{v}_{I}-\bar{z})_i>0.\label{ineq:d3_i=0}
\end{eqnarray*}
Combining with the fact that
\begin{eqnarray*}
\textrm{if }\bar{z}_{i}<0,\textrm{ then }(\sigma \bar{v}_I-\bar{z})_i>0,
\end{eqnarray*}
which can be derived from the KKT condition \eqref{eq:KKT}, we obtain that
\begin{eqnarray*}
\textrm{if }\bar{z}_{i}<0,\textrm{ then }(d_3)_i=0.\label{eq:d2_i=0}
\end{eqnarray*}
%From \blue{the KKT condition} \eqref{eq:KKT}, we have $\bar{z}=-(\cB\bar{x}_{I}-c_{I})$ and %\blue{ $\mathbb{R}^{m_I}_{-}\ni \bar{v}_{I}\,\perp\,\cB_{I} \bar{x}-c_{I}\in\mathbb{R}^{m_I}_{+}$}, so
%\begin{eqnarray}
%\textrm{if }(\cB_I\bar{x}-c_{I})_{i}>0,\textrm{ then }(\bar{v}_I)_{i}=0, \label{ineq:(Bx-c)_i>0}
%\end{eqnarray}
%\blue{and}
%\begin{eqnarray}
%\bar{z}=-(\cB\bar{x}_{I}-c_{I})\in\mathbb{R}^{m_I}_{-}. \label{ineq:z<0}
%\end{eqnarray}
%In combination of the structure of $\partial\Pi_{\mathbb{R}^{m_I}_{+}}(\sigma \bar{v}_{I}-\bar{z})$, $V_3d_3=0$ implies
%\begin{eqnarray}
%(d_3)_i=0,\textrm{ if }(\sigma\bar{v}_{I}-\bar{z})_i>0.\label{ineq:d3_i=0}
%\end{eqnarray}
%From \eqref{ineq:(Bx-c)_i>0} and \eqref{ineq:z<0}, we get
%\begin{eqnarray}
%\textrm{if }(\cB_I\bar{x}-c_{I})_{i}>0,\textrm{ then }(\sigma \bar{v}_I-\bar{z})_i>0.
%\end{eqnarray}
%Allowing for \eqref{ineq:d3_i=0}, we obtain
%\begin{eqnarray}
%\textrm{if }(\cB_I\bar{x}-c_{I})_{i}>0,\textrm{ then }(d_3)_i=0.\label{eq:d2_i=0}
%\end{eqnarray}
Therefore, for any $\hat{d}\in\textrm{lin}(T_{\mathbb{R}^{m_I}_{-}}(\bar{z}))$,
\begin{eqnarray}
\inprod{d_3}{\hat{d}}=\sum_{i:\bar{z}_i=0}(d_3)_i\hat{d}_i + \sum_{i:\bar{z}_i<0}(d_3)_i\hat{d}_i = 0.\label{eq:d3 perp dhat}
\end{eqnarray}
That is,
\begin{eqnarray*}
d_3\in[\textrm{lin}(T_{\mathbb{R}^{m_I}_{-}}(\bar{z}))]^{\perp}.
\end{eqnarray*}

Let us denote the index set
\begin{eqnarray}
\Xi_{j} := G_{j}\cap \textrm{supp}(\bar{v})=\{i\in G_{j}\,|\,\theta_{i}=1\},\label{eq:Xi_j}
\end{eqnarray}
where $\bar{v}=\textrm{Prox}_{\sigma p_2}(\bar{x}-\sigma (\cA^{*}\bar{u}+\cB_{E}^{*}\bar{v}_{E}+\cB_{I}^{*}\bar{v}_{I}))$, $\theta_{i}$ is defined in \eqref{eq:theta}.

If $\bar{x}_{i}\neq 0$, then from
\begin{eqnarray*}
-(\cA^*\bar{u}+\cB_{E}^{*}\bar{v}_{E}+\cB_{I}^{*}\bar{v}_{I})\ =\ \bar{s}&=&\textrm{Prox}_{p^*/\sigma}(\sigma^{-1}\bar{x}-(\cA^*\bar{u}+\cB_{E}^{*}\bar{v}_{E}+\cB_{I}^{*}\bar{v}_{I}))\nn \\
&=& \Pi_{B^{\lambda_2}_{\infty,n} + B_{2}}(\sigma^{-1}\bar{x}-(\cA^*\bar{u}+\cB_{E}^{*}\bar{v}_{E}+\cB_{I}^{*}\bar{v}_{I})),
\label{eq:projector D1+D2}
\end{eqnarray*}
we have
\begin{eqnarray*}
\norm{(\sigma^{-1}\bar{x}-(\cA^*\bar{u}+\cB_{E}^{*}\bar{v}_{E}+\cB_{I}^{*}\bar{v}_{I}))_{G_j}} > \lambda_{1,j}  &\textrm{and}& |(\sigma^{-1}\bar{x}-(\cA^*\bar{u}+\cB_{E}^{*}\bar{v}_{E}+\cB_{I}^{*}\bar{v}_{I}))_{i}| > \lambda_2.
\end{eqnarray*}
That is, $i\in \Xi_{j}$. So $(V_2)_{\overline{\cI}\ \overline{\cI}}(\cB^*_Ed_2+\cB^*_Id_3)_{\overline{\cI}}=0$, where $\overline{\cI}:=\{i\,|\,\bar{x}_{i}\neq 0\}$, implies $(\cB^*_Ed_2+\cB^*_Id_3)_{\overline{\cI}}=0$ by the structure of the generalized Jacobian $\hat{\partial}\mbox{Prox}_{\sigma p}(\cdot)$ in \eqref{eq:partial Prox_sigma*p}.

If $\bar{x}_{i} = 0$, by \eqref{eq:T^lin} we obtain that $(dx)_{i}=0$.

Hence,
\begin{eqnarray}
\left\langle\left(\begin{array}{c} d_{2}\\ d_{3}\end{array}\right),\left(\begin{array}{c} \cB_E\\ \cB_I\end{array}\right)dx\right\rangle &=& \inprod{\cB_E^{*}d_{2}+\cB_I^{*}d_{3}}{dx}\nn\\
&=& \sum_{i:\bar{x}_i\neq 0}(\cB_E^{*}d_{2}+\cB_I^{*}d_{3})_{i}(dx)_{i} +\sum_{i:\bar{x}_i= 0}(\cB_E^{*}d_{2}+\cB_I^{*}d_{3})_{i}(dx)_{i}\nn\\
&=& 0.\label{eq:d2d3 perp Bdx}
\end{eqnarray}
From the nondegeneracy condition \eqref{eq:constraint nondegeneracy condition2}, there exist $dx\in\textrm{T}^{\textrm{lin}}(\bar{x})$ and $\hat{d}\in\mathbb{R}^{n}$ such that $\cB_E dx=d_{2}$ and $\cB_I dx+\hat{d}=d_{3}$.
Therefore, in combination with \eqref{eq:d3 perp dhat} and \eqref{eq:d2d3 perp Bdx}, we have
\begin{eqnarray*}
\left\langle\left(\begin{array}{c} d_{2}\\ d_{3}\end{array}\right),\left(\begin{array}{c} d_{2}\\ d_{3}\end{array}\right)\right\rangle &=& \left\langle\left(\begin{array}{c} d_{2}\\ d_{3}\end{array}\right),\left(\begin{array}{cc} \cB_E & 0\\ \cB_I & I\end{array}\right)\left(\begin{array}{c} dx\\ \hat{d}\end{array}\right)\right\rangle\\
&=&\left\langle\left(\begin{array}{c} \cB_E^{*}d_{2}+\cB_I^{*}d_{3}\\ d_{3}\end{array}\right),\left(\begin{array}{c} dx\\ \hat{d}\end{array}\right)\right\rangle\\
&=&0.
\end{eqnarray*}
Now we have proved that the elements of the generalized Jacobian $\hat{\partial}^{2}\varphi(\bar{u},\bar{v}_{E},\bar{v}_{I})$ are all positive definite.\\
``(ii)$\Rightarrow$(iii)": This is obvious.\\
``(iii)$\Rightarrow$(i)": For this result, we prove it by contradiction. We assume that \eqref{eq:constraint nondegeneracy condition2} does not hold. Then for $d=\left(\begin{array}{c} d_{1}\\ d_{2}\\ d_{3}\end{array}\right)$ with $d_{1}\in\mathbb{R}^{m}$, $d_{2}\in\mathbb{R}^{m_{E}}$, $d_{3}\in\mathbb{R}^{m_{I}}$, we can find
\begin{eqnarray*}
0\neq d_2\in[\cB_{E}\textrm{T}^{\textrm{lin}}(\bar{x})]^{\perp},
\end{eqnarray*}
and
\begin{eqnarray*}
0\neq d_3\in[\cB_{I}\textrm{T}^{\textrm{lin}}(\bar{x})]^{\perp}\cap[\textrm{lin}(T_{\mathbb{R}^{m_I}_{-}}(\bar{z}))]^{\perp}.
\end{eqnarray*}

Since $d_2\in[\cB_{E}\textrm{T}^{\textrm{lin}}(\bar{x})]^{\perp}$, then for any $dx\in \textrm{T}^{\textrm{lin}}(\bar{x})$,
\begin{eqnarray*}
\inprod{d_2}{\cB_{E}dx} = \inprod{\cB_{E}^{*}d_2}{dx} = 0,
\end{eqnarray*}
i.e.,
\begin{eqnarray}
\label{eq:B_E*d2-in-linT-orth}
\cB_{E}^{*}d_2 \in [\textrm{T}^{\textrm{lin}}(\bar{x})]^{\perp} = \Big\{\tilde{d}\in\mathbb{R}^{n}\,|\,\tilde{d}_{i}=0\textrm{ if } \bar{x}_{i}\neq 0\Big\}.
\end{eqnarray}
%\begin{eqnarray*}
%\cB_{E}^{*}d_2 \in [\textrm{T}^{\textrm{lin}}(\bar{x})]^{\perp} = \Big\{\tilde{d}\in\mathbb{R}^{n}\,|\,\tilde{d}_{G_j}=0\textrm{ if } \bar{x}_{G_j}\neq 0,\,\tilde{d}_{i}=0\textrm{ if } \bar{x}_{i}\neq 0\Big\}.
%\end{eqnarray*}
We may only consider the $j$th group. It is discussed in two cases. (i). $\bar{x}_{G_j}\neq 0$: if $\bar{x}_{i}\neq 0$ for some $i\in G_{j}$, then $(\cB_{E}^{*}d_2)_{i}=0$ by \eqref{eq:B_E*d2-in-linT-orth}; if $\bar{x}_{i} = 0$ for some $i\in G_{j}$, then from \eqref{eq:prox-sig*p2} we have $(\mbox{Prox}_{\sigma p_2}(\bar{x}-\sigma (\cA^{*}\bar{u}+\cB_{E}^{*}\bar{v}_{E}+\cB_{I}^{*}\bar{v}_{I})))_{i}=0$, therefore $\theta_{i}=0$ by \eqref{eq:theta}. Hence, according to \eqref{eq:partial Prox_sigma*p}, we have $(V_2^{0})_{G_jG_j}(\cB_{E}^{*}d_2)_{G_j}=0$. (ii). $\bar{x}_{G_j} = 0$: from \eqref{eq:prox-sig*p}, we can obtain $(\mbox{Prox}_{\sigma p_2}(\bar{x}-\sigma (\cA^{*}\bar{u}+\cB_{E}^{*}\bar{v}_{E}+\cB_{I}^{*}\bar{v}_{I})))_{G_j}=0$. By \eqref{eq:partial Prox_sigma*p}, we also have $(V_2^{0})_{G_jG_j}(\cB_{E}^{*}d_2)_{G_j}=0$. From the two cases, we can find $V_{2}^{0}\in\hat{\partial}\mbox{Prox}_{\sigma p}(\bar{x}-\sigma (\cA^{*}\bar{u}+\cB_{E}^{*}\bar{v}_{E}+\cB_{I}^{*}\bar{v}_{I}))$ such that $V_{2}^{0}(\cB_{E}^{*}d_2)=0$. In the same way, we can get $V_{2}^{0}(\cB_{I}^{*}d_3)=0$. Therefore, $V_{2}^{0}(\cB_{E}^{*}d_2+\cB_{I}^{*}d_3)=0$.
%So if $\bar{x}_{i}\neq 0$, then $(\cB_{E}^{*}d_2)_{i}=0$. If $\bar{x}_{i}=0$, \eqref{eq:projector D1+D2} implies \red{$|(\sigma^{-1}\bar{x}-(\cA^*\bar{u}+\cB_{E}^{*}\bar{v}_{E}+\cB_{I}^{*}\bar{v}_{I}))_{i}| < \lambda_2$}, then for any \blue{$V_{2}\in\tilde{\partial}\mbox{Prox}_{\sigma p}(\bar{x}-\sigma (\cA^{*}\bar{u}+\cB_{E}^{*}\bar{v}_{E}+\cB_{I}^{*}\bar{v}_{I}))$}, we have $(V_{2})_{i}(\cB_{E}^{*}d_2)_{i}=0$. Thus, in both cases, we can find \blue{$V_{2}\in\tilde{\partial}\mbox{Prox}_{\sigma p}(\bar{x}-\sigma (\cA^{*}\bar{u}+\cB_{E}^{*}\bar{v}_{E}+\cB_{I}^{*}\bar{v}_{I}))$} such that $V_{2}\cB_{E}^{*}d_2=0$. In the same way, we can get $V_{2}\cB_{I}^{*}d_3=0$. Therefore, $V_{2}(\cB_{E}^{*}d_2+\cB_{I}^{*}d_3)=0$.
Meanwhile,
\begin{eqnarray}
d_3\in[\textrm{lin}(T_{\mathbb{R}^{m_I}_{-}}(\bar{z}))]^{\perp}:= \Big\{\check{d}\in\mathbb{R}^{m_I}\,|\,\check{d}_{i}\in\mathbb{R},\textrm{ if }\bar{z}_i=0;\,\,\check{d}_{i}=0,\textrm{ if }\bar{z}_i<0\Big\}.
\label{eq:orth-lineality-R-}
\end{eqnarray}
If $\bar{z}_{i}=0$, then $(\sigma \bar{v}_{I}-\bar{z})_{i}\leq 0$. In combination with \eqref{eq:Jacobian-Projector-R+} and \eqref{eq:orth-lineality-R-}, we can deduce $V_{3}^{0}d_{3}=0$.

In summery, we have found a $d\neq 0$ such that $\inprod{d}{\widehat{H}d}=0$, which contradicts the positive definiteness of $\widehat{H}$. Hence, the primal constraint nondegenaracy condition (\ref{eq:constraint nondegeneracy condition2}) holds.
\end{proof}

\subsection{Local convergence rate of the SSN}
\label{subsec:Local convergence rate of the SSN}

In this subsection, we analyze the local convergence rate of the SSN. But before that, we need to consider the strong
semismoothness of $\nabla\varphi$.  For the definitions of semismoothness and $\gamma$-order semismoothness, one
may see the following.
\begin{definition}
(semismoothness) \cite{Kummer1988Newton,QiS1993} Let $\mathcal{O} \subseteq \mathcal{R}^{n}$ be an open set, $\mathcal{K}:\mathcal{O}\subseteq \mathcal{R}^{n}\rightrightarrows \mathcal{R}^{n \times m}$ be a nonempty and compact valued, upper-semicontinuous set-valued mapping, and $F:\mathcal{O}\rightarrow \mathcal{R}^{n}$ be a locally Lipschitz continuous function. $F$ is said to be semismooth at $x \in O$ with respect to the multifunction $\mathcal{K}$ if $F$ is directionally differentiable at $x$ and for any $V \in \mathcal{K}(x+\Delta x)$ with $\Delta x\rightarrow 0$,
\begin{equation*}
  F(x+\Delta x)-F(x)-V\Delta x=o(\|\Delta x\|).
\end{equation*}
Let $\gamma$ be a positive constant. $F$ is said to be $\gamma$-order (strongly, if $\gamma=1$) semismooth at $X$ with respect to $\mathcal{K}$ if $F$ is directionally differentiable at $x$ and for any $V \in \mathcal{K}(x+\Delta x)$ with $\Delta x\rightarrow 0$,
\begin{equation*}
  F(x+\Delta x)-F(x)-V\Delta x=O(\|\Delta x\|^{1+\gamma}).
\end{equation*}
\end{definition}

In the following, we present the following result about the semismoothness of $\nabla\varphi$.
\begin{proposition}
$\nabla\varphi$ is strongly semismooth.
\end{proposition}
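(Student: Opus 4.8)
The plan is to exploit the calculus of strong semismoothness: the class of locally Lipschitz, strongly semismooth mappings is closed under composition, finite Cartesian products, and summation, and it contains every affine map. Inspecting the explicit formula for $\nabla\varphi$, I observe that each of its three block components is assembled from the linear operators $\cA,\cB_E,\cB_I$ and their adjoints (together with the additive constants $b,c_E,c_I$) composed with the three maps $\textrm{Prox}_{\sigma h}(\cdot)$, $\textrm{Prox}_{\sigma p}(\cdot)$ and $\Pi_{\mathbb{R}^{m_I}_{+}}(\cdot)$ studied in Section~\ref{sec:Generalized Jacobian}. Hence it suffices to prove that each of these three operators is strongly semismooth, after which the reassembly of $\nabla\varphi$ by affine pre-/post-composition and addition preserves the property.

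First, $\Pi_{\mathbb{R}^{m_I}_{+}}(u_r)=\tfrac12(u_r+|u_r|)$ acts componentwise and is piecewise affine; since every piecewise affine mapping is strongly semismooth, this block is immediate. Next, for $\textrm{Prox}_{\sigma h}(u_h)=u_h-\Pi_{B_{2,m}^{\sigma}}(u_h)$ I would show that the metric projection onto the $\ell_2$-ball is strongly semismooth. On the open region $\norm{u_h}>\sigma$ it equals the map $u_h\mapsto \sigma u_h/\norm{u_h}$, which is real analytic there because $\sigma>0$ keeps us away from the origin; on $\norm{u_h}<\sigma$ it is the identity; and both selection functions extend to twice continuously differentiable maps across the boundary sphere $\norm{u_h}=\sigma$. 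A continuous selection of finitely many $C^2$ functions is strongly semismooth, so $\Pi_{B_{2,m}^{\sigma}}(\cdot)$, and therefore $\textrm{Prox}_{\sigma h}(\cdot)$, is strongly semismooth.

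For $\textrm{Prox}_{\sigma p}(\cdot)$ I would use the representation \eqref{eq:prox-sig*p}--\eqref{eq:prox-sig*p2}, namely $\textrm{Prox}_{\sigma p}(u_p)=v-\Pi_{B_{2}}(v)$ with $v=\textrm{Prox}_{\sigma p_2}(u_p)$ the soft-thresholding operator. The soft-thresholding map is piecewise affine, hence strongly semismooth, while $\Pi_{B_{2}}(\cdot)$ is a Cartesian product over the groups $G_1,\dots,G_J$ of $\ell_2$-ball projections $\Pi_{B_{2,|G_j|}^{\sigma\lambda_{1,j}}}(\cdot)$, each strongly semismooth by the previous paragraph. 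Since a Cartesian product of strongly semismooth maps is strongly semismooth and subtraction from the identity preserves the property, the outer map $v\mapsto v-\Pi_{B_{2}}(v)$ is strongly semismooth; composing it with the strongly semismooth inner soft-thresholding map yields strong semismoothness of $\textrm{Prox}_{\sigma p}(\cdot)$.

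Combining the three blocks, each component of $\nabla\varphi$ is a constant vector plus a strongly semismooth operator pre-composed with a fixed affine map in $(u,v_E,v_I)$ and post-composed with one of $\cA,\cB_E,\cB_I$, so $\nabla\varphi$ is strongly semismooth. The main obstacle I anticipate is the careful treatment of the $\ell_2$-ball projection at the boundary spheres $\norm{\cdot}=\sigma$ and $\norm{\cP_jv}=\sigma\lambda_{1,j}$, where the map is nondifferentiable; the crux is to verify that the two analytic selection pieces glue into a genuine piecewise-$C^2$ selection, so that the strong estimate of order $O(\norm{\Delta x}^{2})$, rather than the merely $o(\norm{\Delta x})$ estimate, holds uniformly across the boundary.
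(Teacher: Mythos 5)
Your proof is correct, and its overall skeleton matches the paper's: reduce the strong semismoothness of $\nabla\varphi$ to that of the three building blocks $\Pi_{\mathbb{R}^{m_I}_{+}}(\cdot)$, $\mbox{Prox}_{\sigma h}(\cdot)$, $\mbox{Prox}_{\sigma p}(\cdot)$, then invoke closure of strong semismoothness under affine composition and sums. Where you genuinely diverge is in how the two proximal blocks are certified. The paper handles $\mbox{Prox}_{\sigma h}(\cdot)$ indirectly: it cites the strong semismoothness of the projection onto the second-order cone (Chen--Sun--Sun) and then applies Meng--Sun--Zhao's theorem linking the proximal map of $h$ to the metric projection onto $\mbox{epi}\,h$ (which for $h=\norm{\cdot}$ is exactly the second-order cone); and for $\mbox{Prox}_{\sigma p}(\cdot)$ it simply cites Zhang et al.'s Theorem~3.1. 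You instead work directly with the Moreau decomposition $\mbox{Prox}_{\sigma h}=I-\Pi_{B_{2,m}^{\sigma}}$ already displayed in Section~\ref{sec:Generalized Jacobian}, prove that the $\ell_2$-ball projection is a continuous selection of two $C^{\infty}$ pieces (identity inside, $u\mapsto\sigma u/\norm{u}$ outside, the latter smooth near the sphere because $\sigma>0$), and conclude strong semismoothness from the standard fact that piecewise-$C^2$ (indeed piecewise-$LC^1$) selections are strongly semismooth; you then rebuild $\mbox{Prox}_{\sigma p}$ from Yu's decomposition \eqref{eq:prox-sig*p}--\eqref{eq:prox-sig*p2} as a composition of soft-thresholding (piecewise affine) with a Cartesian product of ball projections, effectively reproving Zhang et al.'s result rather than citing it. The trade-off: your argument is self-contained and elementary, requiring only the piecewise-smooth-selection lemma and the composition calculus, whereas the paper's is shorter but leans on three external results; your route also makes transparent exactly where the nonsmoothness sits (the spheres $\norm{\cdot}=\sigma$ and $\norm{\cP_j v}=\sigma\lambda_{1,j}$ and the kinks of soft-thresholding), which the paper's citation chain hides. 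Both are complete proofs.
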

\begin{proof}
 Based on Proposition 7.4.7 in \cite{Facchinei2003Finite}, we have $\Pi_{\mathbb{R}^{m_I}_{+}}(\cdot)$ is strongly semismooth since it is piecewise affine. By \cite[Proposition 4.3]{ChenSS2003}, the projection operator onto the second order cone is strongly semismooth. Then the strong semismoothness of the proximal operator $\textrm{Prox}_{\sigma h}(\cdot)$ follows from \cite[Theorem 4]{MengSZ2005}.
  In \cite[Theorem 3.1]{ZhangZST}, Zhang et al. have proved that $\mbox{Prox}_{\sigma p}(\cdot)$ is strongly semismooth. Finally, by Theorem 7.5.17 in \cite{Facchinei2003Finite}, we can easily prove the conclusion and we omit the details.
\end{proof}

Since the strong semismoothness of $\nabla\varphi$ has been proved, we are in the position to state the local convergence rate of the SSN method.
\begin{theorem}\label{theorem_SsN_convergence}
Let $\{(u^{j},v_E^{j},v_I^{j},w^{j},s^{j})\}$ be the infinite sequence generated by Algorithm SSN. Then $\{(u^{j},v_E^{j},v_I^{j},w^{j},s^{j})\}$ converges to the unique optimal solution $(\tilde{u},\tilde{v}_E,\tilde{v}_I,\tilde{w},\tilde{s})$ to problem \eqref{eq:subproblem of alm} and
\begin{eqnarray*}
\|(u^{j+1},v_E^{j+1},v_I^{j+1},w^{j+1},s^{j+1})-(\tilde{u},\tilde{v}_E,\tilde{v}_I,\tilde{w},\tilde{s})\|=O(\|(u^{j},v_E^{j},v_I^{j},w^{j},s^{j})-(\tilde{u},\tilde{v}_E,\tilde{v}_I,\tilde{w},\tilde{s})\|^{1+\tau}.
\end{eqnarray*}
\end{theorem}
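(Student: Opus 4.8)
The plan is to read Theorem \ref{theorem_SsN_convergence} as an instance of the classical convergence theory for an inexact semismooth Newton method equipped with an Armijo line search (see, e.g., \cite{QiS1993,Facchinei2003Finite,LiSunToh2018}), and simply verify that the three structural hypotheses that theory requires are already in place. These are: the strong semismoothness of $\nabla\varphi$, which is precisely the preceding Proposition; the positive definiteness of every element of the generalized Hessian $\hat\partial^{2}\varphi$ at the limiting solution, which is furnished by the preceding equivalence theorem under the constraint nondegeneracy condition \eqref{eq:constraint nondegeneracy condition2}; and the inexact-Newton-plus-backtracking structure hardwired into Algorithm SSN. I would split the argument into a global-convergence stage and a local-rate stage.

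First I would record uniqueness and global convergence. Since $\varphi$ is convex and continuously differentiable and, by the equivalence theorem, every element of $\hat\partial^{2}\varphi$ at a stationary point is positive definite, $\varphi$ is strongly convex near any such point; hence \eqref{eq:subproblem of alm} has a unique minimizer $(\tilde u,\tilde v_E,\tilde v_I)$, from which $\tilde w$ and $\tilde s$ are recovered through the Lipschitz maps $\textrm{Prox}_{h^{*}/\sigma}$ and $\textrm{Prox}_{p^{*}/\sigma}$. The regularized matrix $H^{j}+\varepsilon_{j}I$ is positive definite because $\varepsilon_{j}=\nu_{1}\min\{\nu_{2},\|\nabla\varphi(u^{j},v_E^{j},v_I^{j})\|\}>0$ while $H^{j}$ is positive semidefinite (recall $V_{1}\succ 0$, and $V_{2},V_{3}$ are symmetric positive semidefinite). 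Consequently the computed direction is a genuine descent direction, the Armijo test in Step 2 is well defined, $\{\varphi(u^{j},v_E^{j},v_I^{j})\}$ strictly decreases, and coercivity of $\varphi$ together with convexity forces the whole sequence to converge to the unique minimizer.

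For the local rate I would follow the standard template. Once the iterates enter a neighborhood of $(\tilde u,\tilde v_E,\tilde v_I)$ on which the generalized Hessians are uniformly positive definite, so that their inverses are uniformly bounded, strong semismoothness of $\nabla\varphi$ gives both that $\|\nabla\varphi(u^{j},v_E^{j},v_I^{j})\|$ is comparable to the distance to the solution and that the semismooth Newton residual is of order $O(\|(u^{j},v_E^{j},v_I^{j})-(\tilde u,\tilde v_E,\tilde v_I)\|^{2})$. Combining this with the inexactness tolerance $\eta_{j}=\min\{\overline{\eta},\|\nabla\varphi(u^{j},v_E^{j},v_I^{j})\|^{1+\tau}\}$ and with the Levenberg--Marquardt shift obeying $\varepsilon_{j}=O(\|\nabla\varphi(u^{j},v_E^{j},v_I^{j})\|)$, a standard estimate shows the unit step $\alpha_{j}=1$ is eventually accepted by the Armijo test and that the error contracts at order $1+\tau$ (with $\tau\in(0,1]$, the exponent $1+\tau$ is the binding one against the second-order semismooth term). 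Since $\tilde w$ and $\tilde s$ are Lipschitz images of $(u^{j},v_E^{j},v_I^{j})$ under the proximal maps, they inherit the same rate, which yields the stated estimate.

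The main obstacle I anticipate lies not in the abstract template but in the two quantitative ingredients that make it run. The first is upgrading the pointwise positive definiteness of $\hat\partial^{2}\varphi$ at the solution to \emph{uniform} positive definiteness on a whole neighborhood, which is what keeps the Newton matrices' inverses bounded and prevents the vanishing shift $\varepsilon_{j}I$ from degrading the asymptotic order. The second is the careful bookkeeping of the three error sources, namely the $O(\|\cdot\|^{2})$ semismoothness residual, the $\|\nabla\varphi\|^{1+\tau}$ inexact-solve tolerance, and the $O(\|\nabla\varphi\|)$ regularization, so as to confirm that their combination produces exactly the exponent $1+\tau$ and that the full Newton step is guaranteed to pass the line-search test for all large $j$.
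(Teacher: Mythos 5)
Your proposal is correct and follows essentially the same route as the paper: the paper in fact states Theorem \ref{theorem_SsN_convergence} without any proof, treating it as an immediate consequence of the preceding strong-semismoothness proposition, the positive-definiteness characterization of $\hat{\partial}^{2}\varphi$ under constraint nondegeneracy, and the standard convergence theory for inexact semismooth Newton methods with Armijo line search (see \cite{QiS1993,Facchinei2003Finite,LiSunToh2018,ZhangZST}), which is precisely the template you verify. The only cosmetic difference is that the paper's inexactness test measures the residual against $H^{j}$ alone rather than $H^{j}+\varepsilon_{j}I$, so the Levenberg--Marquardt shift you carefully bookkeep never actually enters the final error recursion.
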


\subsection{Convergence analysis of the ALM}
\label{subsec:Convergence analysis of the ALM}

In this subsection, we adapt the results developed in \cite{Rockafellar76a,Rockafellar76b,Luque} to establish the convergence theory of the ALM for problem (D).

Notably, the inner subproblem \eqref{eq:subproblem of alm} has no closed-form solution, so we consider how to solve it approximately with the following stopping criteria introduced in \cite{Rockafellar76a,Rockafellar76b}.
\begin{equation*}\label{Def_stop_cond_1}
\begin{split}
\text{(A)}&\hspace*{2mm} \varphi_{k}(u^{k+1},v_E^{k+1},v_I^{k+1})-\inf_{u,v_E,v_I}\varphi_{k}(u,v_E,v_I)\leq \epsilon_{k}^{2}/(2\sigma_{k}),\,\epsilon_{k}\geq 0,\,\sum_{k=1}^{+\infty}\epsilon_{k}<+\infty,\\
\text{(B)}&\hspace*{2mm} \varphi_{k}(u^{k+1},v_E^{k+1},v_I^{k+1})-\inf_{u,v_E,v_I}\varphi_{k}(u,v_E,v_I)\leq \delta_{k}^{2}/(2\sigma_{k})(\norm{x^{k+1}-x^{k}}^{2}+\norm{y^{k+1}-y^{k}}^{2}+\\
&\hspace*{2mm} \norm{z^{k+1}-z^{k}}^{2}),\,\delta_{k}\geq 0,\,\sum_{k=1}^{+\infty}\delta_{k}<+\infty,\\
\text{(B$'$)}&\hspace*{2mm} \norm{\nabla\varphi_{k}(u^{k+1},v_E^{k+1},v_I^{k+1})}\leq \delta_{k}'/(2\sigma_{k})(\norm{x^{k+1}-x^{k}}^{2}+\norm{y^{k+1}-y^{k}}^{2}+\norm{z^{k+1}-z^{k}}^{2})^{\frac{1}{2}},\\
&\hspace*{2mm} \, 0\leq\delta_{k}'\rightarrow 0.
\end{split}
\end{equation*}

Then the global convergence of Algorithm SSN-ALM follows from \cite[Theorem 1]{Rockafellar76a} and
\cite[Theorem 4]{Rockafellar76b} without much difficulty.
\begin{theorem}
Suppose $\inf(D)<+\infty$, and let Algorithm SSN-ALM be executed with the stopping criterion (A). If Assumption \ref{assumption:Slater-D} holds, then the sequence $(x^{k},y^{k},z^{k})$  generated by the algorithm is bounded and $(x^{k},y^{k},z^{k})$  converges to $(\bar{x},\bar{y},\bar{z})$, where $(\bar{x},\bar{y},\bar{z})$ is some optimal solution to (P), and $(u^{k},v_{E}^{k},v_{I}^{k},w^{k},s^{k})$ is asymptotically minimizing
for (D) with $\inf(P) = \max(D)$.

If $(x^{k},y^{k},z^{k})$ is bounded and (P) satisfies Assumption \ref{assumption:Slater-P}, then the sequence $(u^{k},v_{E}^{k},v_{I}^{k},w^{k},s^{k})$ is also
bounded, and all of its accumulation points of the sequence $(u^{k},v_{E}^{k},v_{I}^{k},w^{k},s^{k})$ are optimal solutions
to (D).
\end{theorem}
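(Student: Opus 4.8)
The plan is to identify Algorithm SSN-ALM with an inexact proximal point algorithm and then invoke Rockafellar's classical convergence theory. The essential ingredient, already prepared in Section~\ref{sec:Preliminaries}, is that $\mathcal{T}_f=\partial f$ is maximal monotone. First I would make explicit the correspondence, due to Rockafellar, that applying the method of multipliers to the dual $(D)$ is exactly the proximal point algorithm on $\mathcal{T}_f$, with the multiplier block $(x,y,z)$ playing the role of the proximal iterate. Concretely, when the inner subproblem \eqref{eq:subproblem of alm} is solved exactly, Step~2 produces
$$(x^{k+1},y^{k+1},z^{k+1})=(\mathcal{I}+\sigma_k\mathcal{T}_f)^{-1}(x^k,y^k,z^k),$$
and an inexact version of this identity otherwise. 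Verifying this reduces to checking that the stationarity conditions for minimizing $\Phi_k$ over $(u,v_E,v_I,w,s)$, combined with the multiplier update formulas, reproduce the inclusion $(x^k,y^k,z^k)\in(x^{k+1},y^{k+1},z^{k+1})+\sigma_k\mathcal{T}_f(x^{k+1},y^{k+1},z^{k+1})$.

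Next I would confirm the hypotheses of \cite[Theorem~1]{Rockafellar76a}. The assumption $\inf(D)<+\infty$ together with Assumption~\ref{assumption:Slater-D} gives, via strong duality, that $(P)$ attains its value, i.e.\ $(\mathcal{T}_f)^{-1}(0)\neq\emptyset$, which is the solvability prerequisite of the proximal point theory. The key point is that the stopping criterion (A) used here coincides verbatim with criterion (A) of \cite{Rockafellar76a,Rockafellar76b}: bounding the subproblem gap $\varphi_k(u^{k+1},v_E^{k+1},v_I^{k+1})-\inf\varphi_k$ by $\epsilon_k^2/(2\sigma_k)$ with $\sum_k\epsilon_k<+\infty$ is precisely what keeps the actual iterate within a summable error of the exact proximal step. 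Theorem~1 of \cite{Rockafellar76a} then yields convergence of $(x^k,y^k,z^k)$ to some $(\bar x,\bar y,\bar z)\in(\mathcal{T}_f)^{-1}(0)$, hence an optimal solution of $(P)$; boundedness is automatic from convergence, and the asymptotic minimization of the dual iterates together with the identity $\inf(P)=\max(D)$ comes from the same theorem.

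For the second assertion I would appeal to \cite[Theorem~4]{Rockafellar76b}, which governs the inner variables. Under the standing boundedness of $(x^k,y^k,z^k)$ and Assumption~\ref{assumption:Slater-P} for $(P)$, strong duality on the primal side guarantees that the solution set of $(D)$ is nonempty and that the augmented Lagrangian subproblems are eventually well posed, so that \cite[Theorem~4]{Rockafellar76b} delivers boundedness of $(u^k,v_E^k,v_I^k,w^k,s^k)$ and the optimality of each of its accumulation points for $(D)$.

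The hard part will not be the invocation of Rockafellar's results but the bookkeeping in the first step: one must track the block structure $(u,v_E,v_I,w,s)$ against $(x,y,z)$, confirm that eliminating $w$ and $s$ through the partial minimization defining $\varphi$ leaves the multiplier recursion intact, and confirm that criterion (A), stated here in terms of $\varphi_k$ rather than the full $\Phi_k$, is genuinely the criterion required by the proximal point theory. Once the proximal point identification is secured, the remainder is a direct citation.
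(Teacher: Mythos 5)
Your proposal is correct and takes essentially the same route as the paper: the paper's proof is precisely the observation that, under stopping criterion (A), Algorithm SSN-ALM is an inexact proximal point algorithm on the maximal monotone operator $\mathcal{T}_{f}$ (with the multipliers $(x^{k},y^{k},z^{k})$ as the proximal iterates), after which the conclusions are read off from \cite[Theorem 1]{Rockafellar76a} and \cite[Theorem 4]{Rockafellar76b}. The bookkeeping you flag at the end (eliminating $w,s$ via $\varphi_k$ and matching criterion (A)) is exactly the ``without much difficulty'' step the paper leaves implicit.
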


Next, we state the local convergence rate of Algorithm SSN-ALM.
\begin{theorem}
Suppose $\inf(D)<+\infty$, and let Algorithm SSN-ALM be executed with the stopping criteria (A) and (B). Suppose Assumptions \ref{assumption:Slater-P}, \ref{assumption:Slater-D} and \ref{assumption:y != 0} hold. Suppose that $\cT_{f}$ satisfies the error bound condition \eqref{eq:error bound} for the origin with modulus $a_{f}$. Let $\{(x^{k},y^{k},z^{k},u^{k},v_{E}^{k},v_{I}^{k},w^{k},s^{k})\}$ be any infinite sequence generated by Algorithm SSN-ALM with the stopping criteria (A) and (B$'$). Then, the sequence $\{(x^{k},y^{k},z^{k})\}$ converges to $(\bar{x},\bar{y},\bar{z})\in\Omega$, where $\Omega$ is the solution set to problem (P), and for all $k$ sufficiently large,
\begin{eqnarray*}
\textrm{dist}((x^{k+1},y^{k+1},z^{k+1}),\Omega) \leq \theta_{k}\textrm{dist}((x^{k},y^{k},z^{k}),\Omega),
\end{eqnarray*}
where $\theta_{k}=(a_{f}(a_{f}^{2}+\sigma_{k}^{2})^{-1/2}+2\delta_{k})(1-\delta_{k})^{-1}\rightarrow\theta_{\infty}=a_{f}(a_{f}^{2}+\sigma_{\infty}^{2})^{-1/2}<1$, as $k\rightarrow+\infty$. Moreover, if the nondegeneracy condition \eqref{eq:constraint nondegeneracy condition2} holds, the sequence $\{(u^{k},v_{E}^{k},v_{I}^{k},w^{k},s^{k})\}$ converges to the unique optimal solution to problem (D).

Moreover, if $\cT_{l}$ satisfies the error bound condition \eqref{eq:error bound} for the origin with modulus $a_{l}$ and the stopping criterion (B$'$) is also used, then for all k sufficiently large,
\begin{eqnarray*}
\norm{(u^{k+1},v_{E}^{k+1},v_{I}^{k+1},w^{k+1},s^{k+1})-(\bar{u},\bar{v}_{E},\bar{v}_{I},\bar{w},\bar{s})} \leq \theta'_{k}\norm{(x^{k+1},y^{k+1},z^{k+1})-(x^{k},y^{k},z^{k})},
\end{eqnarray*}
where $\theta'_{k}=a_{l}(1+\delta'_{k})/\sigma_{k}$ with the limit $\lim_{k\rightarrow\infty}\theta'_{k}=a_{l}/\sigma_{\infty}$.
\end{theorem}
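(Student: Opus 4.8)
The plan is to recognize Algorithm SSN-ALM as an inexact proximal point algorithm (PPA) applied to the maximal monotone operator $\cT_f=\partial f$, and then to transport the asymptotic convergence-rate theory of Rockafellar \cite{Rockafellar76a,Rockafellar76b} and Luque \cite{Luque} to the present setting. The first step is to verify the classical equivalence: minimizing $\Phi_k$ over the dual block $(u,v_E,v_I,w,s)$ and then performing the multiplier update in Step 2 produces $(x^{k+1},y^{k+1},z^{k+1})$ as an approximate evaluation of the resolvent $(\mathcal{I}+\sigma_k\cT_f)^{-1}$ at $(x^k,y^k,z^k)$. This rests on the fact that the inner dual minimization is exactly dual to the Moreau proximal subproblem for $f$ at $(x^k,y^k,z^k)$ with parameter $\sigma_k$, so that the inexactness rules (A), (B) and (B$'$) coincide with Rockafellar's inexactness criteria for the inexact PPA.

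With this identification in place, global convergence and boundedness of $\{(x^k,y^k,z^k)\}$ follow from the preceding global convergence theorem (i.e. from \cite[Theorem 1]{Rockafellar76a} and \cite[Theorem 4]{Rockafellar76b}). For the local linear rate, I would invoke the error bound hypothesis on $\cT_f$ at the origin with modulus $a_f$, which is precisely the metric subregularity condition needed in Luque's analysis. Luque's theorem then yields the stated contraction factor
\[
\theta_k=\frac{a_f(a_f^2+\sigma_k^2)^{-1/2}+2\delta_k}{1-\delta_k},
\]
with $\theta_k\to\theta_\infty=a_f(a_f^2+\sigma_\infty^2)^{-1/2}<1$, since $\delta_k\to 0$ and $\sigma_\infty>0$; the summability of $\{\delta_k\}$ from (B) guarantees $\theta_k<1$ for all $k$ sufficiently large.

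For the dual assertions, uniqueness of the limit $(\bar{u},\bar{v}_E,\bar{v}_I,\bar{w},\bar{s})$ follows once the nondegeneracy condition \eqref{eq:constraint nondegeneracy condition2} is assumed: by the equivalence theorem established above, this forces every element of $\hat{\partial}^{2}\varphi$ to be positive definite, so problem (D) has a locally unique solution, toward which the bounded dual sequence (all of whose accumulation points are optimal by the global theorem) must converge. For the dual rate I would bring in the second error bound, now on $\cT_l$ at the origin with modulus $a_l$. The key estimate is that the KKT residual of the approximate dual iterate equals $\nabla\varphi_k(u^{k+1},v_E^{k+1},v_I^{k+1})$, which criterion (B$'$) bounds by $\delta_k'/\sigma_k$ times the primal displacement norm. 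Applying the error bound for $\cT_l$ then gives
\[
\|(u^{k+1},v_E^{k+1},v_I^{k+1},w^{k+1},s^{k+1})-(\bar{u},\bar{v}_E,\bar{v}_I,\bar{w},\bar{s})\|\le \theta_k'\,\|(x^{k+1},y^{k+1},z^{k+1})-(x^k,y^k,z^k)\|,
\]
with $\theta_k'=a_l(1+\delta_k')/\sigma_k$ and limit $a_l/\sigma_\infty$.

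The main obstacle I anticipate is not any single deep argument but the careful bookkeeping in matching the inexactness produced by the inner SSN solver, quantified through $\Phi_k$ and $\nabla\varphi_k$, precisely to the abstract criteria (A), (B), (B$'$), and then propagating the constants through Luque's estimate so that the exact formulas for $\theta_k$ and $\theta_k'$ emerge rather than merely some $O(1/\sigma_k)$ bound. The dual rate in particular hinges on correctly identifying $\nabla\varphi_k$ with the KKT residual and on the availability of the stronger error bound for $\cT_l$, not only for $\cT_f$.
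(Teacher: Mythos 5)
Your proposal follows essentially the same route as the paper: the paper gives no written proof of this theorem, but obtains it exactly as you do, by viewing the ALM applied to (D) as an inexact proximal point algorithm for $\cT_{f}$ (and $\cT_{l}$) and adapting the convergence-rate results of Rockafellar \cite{Rockafellar76a,Rockafellar76b} and Luque \cite{Luque} under the stated error bound conditions and the stopping criteria (A), (B), (B$'$). Your extra bookkeeping --- matching the inexactness rules to Rockafellar's criteria and using the nondegeneracy/positive-definiteness equivalence to get uniqueness of the dual limit --- is consistent with, and slightly more explicit than, what the paper records.
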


\section{Numerical issues for solving the subproblem \eqref{eq:subproblem of alm}}
\label{sec:Numerical issues for solving the subproblem}

The key part of Algorithm SSN-ALM is how to efficiently solve the subproblem \eqref{eq:subproblem of alm}. The most important thing to solve this subproblem is how to efficiently solve the linear system to obtain the Newton direction. Denote $\widetilde{H}:=\sigma^{-1}\widehat{H}$, then the linear system has the following form
\begin{eqnarray}
\widetilde{H}d = \left[\left(\begin{array}{ccc} V_1^{0}& & \\ &0& \\ & & V_3^{0}\end{array}\right)
+ N V_2^{0} N^{T}\right]d = -\sigma^{-1}\nabla\varphi(u,v_{E},v_{I}),\label{eq:detailed Newton system}
\end{eqnarray}
where $N\in \mathbb{R}^{\widehat{m}\times n} (\widehat{m}:=m+m_{E}+m_{I})$ denotes the matrix representation of the linear operator $\left(\begin{array}{c}\cA\\\cB_E\\ \cB_I\end{array}\right)$, $d=\left(\begin{array}{c} d_{1}\\ d_{2}\\ d_{3}\end{array}\right)\in\mathbb{R}^{\widehat{m}}$, $V_{1}^{0}=I_{m} - W_{1}^{0}$, here

\begin{eqnarray*}
W_{1}^{0} &=& \left\{\begin{array}{ll}\frac{\sigma}{\norm{y+\sigma u}}(I_{m}-\frac{(y+\sigma u)(y+\sigma u)^{T}}{\norm{y+\sigma u}^{2}}),&\mbox{if}\ \norm{y+\sigma u}>\sigma,\\
I_{m}, &\mbox{if}\ \norm{y+\sigma u}\leq\sigma,\end{array}\right.
\label{eq:W0}
\end{eqnarray*}
$V_{2}^{0}$ and $V_{3}^{0}$ are defined the same as \eqref{eq:V_2^0} and \eqref{eq:V_3^0}.
%\blue{
%\begin{eqnarray*}
%V_{3}^{0}&=&\textrm{Diag}(\tilde{v}_{3}),(\tilde{v}_{3})_{i}=\left\{\begin{array}{ll}1,&\mbox{if}\ (\sigma v_{I}-z)_{i}>0,\\
%0, &\mbox{if}\ (\sigma v_{I}-z)_{i}\leq0,\end{array}\right.
%\end{eqnarray*}
%}
%\begin{eqnarray*}
%V_{2}^{0}=(I_{n}-\cP^{*}\Sigma\cP)\Theta,
%\end{eqnarray*}
%here
%\begin{eqnarray*}
%\Sigma=\textrm{Diag}(\Sigma_1,\ldots,\Sigma_J),\,j=1,\ldots,J,
%\end{eqnarray*}
%
%\begin{eqnarray}
%\Sigma_j &=& \left\{\begin{array}{ll}\frac{\sigma\lambda_{1,j}}{\norm{\cP_{j}v}}(I-\frac{(\cP_{j}v)(\cP_{j}v)^{T}}{\norm{\cP_{j}v}^{2}}),&\mbox{if}\,\norm{\cP_{j}v}>\sigma\lambda_{1,j},\\
%I,&\mbox{if}\,\norm{\cP_{j}v}=\sigma\lambda_{1,j},\\
%I, &\mbox{if}\,\norm{\cP_{j}v}<\sigma\lambda_{1,j},\end{array}\right.
%\label{eq:Sigmaj}
%\end{eqnarray}
%with $v=\mbox{Prox}_{\sigma p_2}(x-\sigma (\cA^{*}u+\cB_{E}^{*}v_{E}+\cB_{I}^{*}v_{I}))$,
%$\Theta=\textrm{Diag}(\theta)$ and
%\blue{
%\begin{eqnarray}
%\theta_{i} = \left\{\begin{array}{ll}0,& \mbox{if}\ |(x-\sigma (\cA^{*}u+\cB_{E}^{*}v_{E}+\cB_{I}^{*}v_{I}))_i| \leq \sigma\lambda_{2},\\
%1, & \mbox{if}\ |(x-\sigma (\cA^{*}u+\cB_{E}^{*}v_{E}+\cB_{I}^{*}v_{I}))_i| > \sigma\lambda_{2}.\end{array}\right.
%\end{eqnarray}
%}

%As in \eqref{eq:detailed Newton system}, $V_{1}^{0}$ and $V_{2}^{0}$ have several cases, we focus on the relatively most difficult cases. That is, we only consider the case $\norm{u_{h}}>1$ in \eqref{eq:W0} for $V_{1}^{0}$ and the case $\norm{v_{j}}>\lambda_{1,j}$ in \eqref{eq:Sigmaj} for $V_{2}^{0}$. For other cases of $V_{1}^{0}$ and $V_{2}^{0}$, they are less expensive.

In \cite[Section 4.3]{ZhangZST}, Zhang et al. once described the special structure similar to $N V_2^{0} N^{T}$. For completeness, we also state the details here. Note that
\begin{eqnarray*}
\textrm{supp}(\diag(\cP_{j}^{*}\cP_{j}\Theta))=\Xi_{j},
\end{eqnarray*}
where
%\begin{eqnarray*}
%\Xi_{j} := G_{j}\cap \textrm{supp}(v)=\{i\in G_{j}\,|\,\theta_{i}=1\},
%\end{eqnarray*}
%and $v=\textrm{Prox}_{\sigma p_2}(x-\sigma (\cA^{*}u+\cB_{E}^{*}v_{E}+\cB_{I}^{*}v_{I}))$.
$\Xi_{j}$ is the index set defined in \eqref{eq:Xi_j} that corresponds to the non-zero elements of $v=\textrm{Prox}_{\sigma p_2}(x-\sigma (\cA^{*}u+\cB_{E}^{*}v_{E}+\cB_{I}^{*}v_{I}))$ in the $j$th group. The diagonal matrix $\cP^{*}_{j}\cP_{j}\Theta$ is expected to contain
only a few ones on the diagonal so that the computational cost of $N\cP^{*}_{j}\cP_{j}\Theta N^{T}$
can be greatly reduced. Denote $\Xi_{>}:=\{j\,|\,\norm{\cP_{j}v}>\sigma\lambda_{1,j},\,j=1,\ldots,J\}$.
%and $\Xi_{\leq}:=\{j\,|\,\norm{v_{j}}\leq\lambda_{1,j},\,j=1,\ldots,J\}$.
Let $N_{j}\in\mathbb{R}^{\widehat{m}\times |\Xi_{j}|}$ be the sub-matrix of $N$ with those columns in $\Xi_{j}$ and $s_{j}:=(\cP^{*}_{j}\cP_{j}v)_{\Xi_{j}}\in\mathbb{R}^{|\Xi_{j}|}$ be the sub-vector of $\cP^{*}_{j}v_{j}$ restricted to $\Xi_{j}$. Then we have
\begin{eqnarray*}
NV_{2}^{0}N^{T} &=& N(I_{n}-\cP^{*}\Sigma \cP)\Theta N^{T} \\
&=& \sum_{j\in\Xi_{>}}\left(1-\frac{\sigma\lambda_{1,j}}{\norm{\cP_{j}v}}\right)N\cP^{*}_{j}\cP_{j}\Theta N^{T} + \frac{\sigma\lambda_{1,j}}{\norm{\cP_{j}v}^{3}}N(\cP^{*}_{j}\cP_{j}v)(\cP^{*}_{j}\cP_{j}v)^{T}N^{T}\\
&=& \sum_{j\in\Xi_{>}}\left(1-\frac{\sigma\lambda_{1,j}}{\norm{\cP_{j}v}}\right)N_{j}N_{j}^{T} + \frac{\sigma\lambda_{1,j}}{\norm{\cP_{j}v}^{3}}(N_{j}s_{j})(N_{j}s_{j})^{T}.
\end{eqnarray*}

If we let $r:=\sum_{j\in\Xi_{>}}|\Xi_{j}|$, $r_{2}:=|\Xi_{>}|$, $D=[B\ \  C]\in\mathbb{R}^{\widehat{m}\times (r+r_{2})}$ with $B_{j}:=\sqrt{(1-\frac{\sigma\lambda_{1,j}}{\norm{\cP_{j}v}})}N_{j}\in\mathbb{R}^{\widehat{m}\times |\Xi_{j}|}$, $B:=[B_{j}]_{j\in\Xi_{>}}\in\mathbb{R}^{\widehat{m}\times r}$, $c_{j}:=\sqrt{\frac{\sigma\lambda_{1,j}}{\norm{\cP_{j}v}^{3}}}(N_{j}s_{j})\in\mathbb{R}^{\widehat{m}}$ and $C:=[c_{j}]_{j\in\Xi_{>}}\in\mathbb{R}^{\widehat{m}\times r_{2}}$, then
\begin{eqnarray}
NV_{2}^{0}N^{T} &=& DD^{T}.\label{eq:NV2V}
\end{eqnarray}
For $V_{1}^{0}$, we only analyze the case of $\norm{y+\sigma u}>1$, which is relatively most complicated. That is,
\begin{eqnarray}
V_{1}^{0} = \left(1-\frac{\sigma}{\norm{y+\sigma u}}\right)I_{m}-\sigma\frac{(y+\sigma u)(y+\sigma u)^{T}}{\norm{y+\sigma u}^{3}}. \label{eq:V_1^0}
\end{eqnarray}%(1-\frac{\sigma}{\norm{\cP_{j}v}})I_{m} +\frac{\sigma(\cP_{j}v)(\cP_{j}v)^{T}}{\norm{\cP_{j}v}^{3}}.
Combining \eqref{eq:NV2V} and \eqref{eq:V_1^0}, the coefficient matrix of \eqref{eq:detailed Newton system} has the form
\begin{eqnarray*}
\widetilde{H} = \left(\begin{array}{ccc} \left(1-\frac{\sigma}{\norm{y+\sigma u}}\right)I_{m}-\frac{\sigma(y+\sigma u)(y+\sigma u)^{T}}{\norm{y+\sigma u}^{3}}& & \\ &0& \\ & & V_3^{0}\end{array}\right)
+ DD^{T}.\label{eq:coefficient matrix of the Newton system}
\end{eqnarray*}
Due to the sparse structure, the number $r+r_{2}$ may be much smaller than $\widehat{m}$. Therefore, by exploiting the second order sparsity the total computational costs of using the sparse Cholesky factorization to solve the linear system \eqref{eq:detailed Newton system} are significantly reduced from $O(\widehat{m}^{3})$ to $O(\widehat{m}(r+r_{2})^{2})$. But if the dimension of the problem is very large, we tend to apply the PCG method instead of the sparse Cholesky factorization method because it is more efficient in practical computation. When using the PCG method, we adopt a diagonal preconditioner.

\section{Numerical experiments}
\label{sec:Numerical experiments}

In this section, we compare the performances of our proposed algorithm with the semi-proximal ADMM on the cssLasso problems. We implement the algorithm in {\sc Matlab} R2019a. All runs are performed on a NoteBook (i710710u 4.7G with 16 GB RAM).

In our experiments, we measure the quality of the computed solution by the relative primal infeasibility $R_{P}$ and dual infeasibility $R_{D}$, and the relative complementarity condition $R_{C}$ as follows
\begin{eqnarray*}
R_{P} &=& \frac{\norm{\cA x-y-b}+\norm{\cB_{E}x-c_{E}}+\norm{\cB_{I}x-c_{I}+z}}{1+\norm{b}+\norm{c_{E}}+\norm{c_{I}}},\\
R_{D} &=& \frac{\norm{\mathcal{A}^{*}u + \cB^{*}_{E} v_{E} + \cB^{*}_{I} v_{I} + s}+\norm{w-u}}{1+\norm{u}+\norm{v_{E}}+\norm{v_{I}}+\norm{s}+\norm{w}},\\
R_{C} &=& \frac{\norm{w-\textrm{Prox}_{h^*}(w+y)}+\norm{s-\textrm{Prox}_{p^*}(s+x)}+\norm{v_{I}-\Pi_{\mathbb{R}^{m_I}_{-}}(\cB_{I}x-c_{I}+v_{I})}}{1+\norm{w}+\norm{s}+\norm{v_{I}}}.
\end{eqnarray*}
We stop the algorithm when
\begin{eqnarray*}
\eta_{kkt}:=\max\{R_{P},R_{D},R_{C}\} < \texttt{Tol},
\end{eqnarray*}
with $\texttt{Tol}=10^{-6}$ as the default. We also stop the algorithm if it reaches the maximum iteration number 200 for the SSN-ALM and 10000 for the semi-proximal ADMM. Meanwhile, we set the maximum running time to be $4$ hours. In addition, we also use the relative gap $R_{G}$ as a measurement, which is defined as
\begin{eqnarray*}
R_{G}:=\frac{|\pobj-\dobj|}{1+|\pobj|+|\dobj|},
\end{eqnarray*}
where $\pobj$ and $\dobj$ denote the primal and dual objective values, respectively.
The weights $\omega_{j}=\sqrt{|G_{j}|},\,\forall j=1,2,\ldots,J$. We test the problems with two different sets of regularization parameters:
\begin{eqnarray*}
(S_{1}) & \lambda_{1}=0.5\gamma\norm{\cA^{*}b}_{\infty},\,\lambda_{2}=0.5\gamma\norm{\cA^{*}b}_{\infty};\\
(S_{2}) & \lambda_{1}=0.8\gamma\norm{\cA^{*}b}_{\infty},\,\lambda_{2}=0.2\gamma\norm{\cA^{*}b}_{\infty},
\end{eqnarray*}
where the parameter $\gamma$ is chosen to produce a reasonable number of nonzero elements in the resulting solution $x$.
Let $\hat{x}$ be the vector obtained by sorting $x$ such that $|\hat{x}_{1}|\geq|\hat{x}_{2}|\geq\cdots\geq|\hat{x}_{n}|$. In our numerical experiments, we define the number of nonzero elements as the minimal $k$ satisfying
\begin{eqnarray*}
\sum_{i=1}^{k}|\hat{x}_{i}|\geq 0.9999\|x\|_{1}.
\end{eqnarray*}

\subsection{The semi-proximal ADMM for the dual problem ($D$)}

For the clarity, we present the details about the semi-proximal ADMM for the dual problem ($D$). For a given $\sigma>0$, the semi-proximal augmented Lagrangian function associated with the dual problem $(D)$ is defined by
\begin{eqnarray*}
\widehat{L}_{\sigma}(u,v_{E},v_{I},\hat{v}_{I},w,s;x,y,z)&:=&h^{*}(w)+p^{*}(s)+\delta_{\mathbb{R}^{m_I}_{-}}(\hat{v}_{I})+\inprod{b}{u}+\inprod{c_{E}}{v_{E}}+\inprod{c_{I}}{v_{I}}\\
&&+\frac{\sigma}{2}\|\mathcal{A}^{*}u+\cB^{*}_{E}v_{E}+\cB^{*}_{I}v_{I}+s-\sigma^{-1}x\|^{2}-\frac{1}{2\sigma}\|x\|^{2}\\
&&+\frac{\sigma}{2}\|w-u-\sigma^{-1}y\|^{2}-\frac{1}{2\sigma}\|y\|^{2}+\frac{\sigma}{2}\|v_{I}-\hat{v}_{I}-\sigma^{-1}z\|^{2}\\
&&-\frac{1}{2\sigma}\|z\|^{2} + \frac{\tilde{\tau}}{2\sigma}\norm{v_{E}-v_{E}^{k}}^{2}.%\blue{+\delta_{\mathbb{R}^{m_{I}}_{-}}(\hat{v}_{I})}.
\end{eqnarray*}

If we minimize the above function with respect to the variables $\hat{v}_{I},w$ and $s$, they have the closed-form solution. As for the minimization with respect to $u, v_{E}$ and $v_{I}$ at the $k$th iteration, we need to solve the following linear system of equations
\begin{eqnarray*}
M
\left(\begin{array}{l}
u\\v_{E}\\v_{I}
\end{array}\right)
=
\textrm{rhs}^{k},
\label{eq:linear system for ADMM}
\end{eqnarray*}
where
\begin{eqnarray*}
M=\left[
\left(\begin{array}{l}
\cA\\\cB_{E}\\\cB_{I}
\end{array}\right)
\left(\begin{array}{lll}
\cA^{*}\quad \cB_{E}^{*}\quad \cB_{I}^{*}
\end{array}\right)
+
\left(\begin{array}{lll}
\cI\quad\ \ \ 0\quad\ \ \ 0\\
0\quad \tilde{\tau}\sigma^{-2}\cI\quad 0\\
0\quad\ \ \ 0\quad\ \ \ \cI\\
\end{array}\right)
\right],
\end{eqnarray*}
and
\begin{eqnarray*}
\textrm{rhs}^{k}=
\left(\begin{array}{l}
-\cA(s^{k}-\sigma^{-1}x^{k})+(w^{k}-\sigma^{-1}y^{k})-\sigma^{-1}b\\
-\cB_{E}(s^{k}-\sigma^{-1}x^{k})+\tilde{\tau}\sigma^{-2}v_{E}^{k}-\sigma^{-1}c_{E}\\
-\cB_{I}(s^{k}-\sigma^{-1}x^{k})+(\hat{v}_{I}^{k}+\sigma^{-1}z^{k})-\sigma^{-1}c_{I}
\end{array}\right).
\end{eqnarray*}

Based on the above analysis, the semi-proximal ADMM can be stated as follows

\bigskip
\ni\fbox{\parbox{\textwidth}{\noindent{\bf Algorithm semi-proximal ADMM:} Let $\sigma>0$ be a given parameter, $\tau\in\left(0,\frac{1+\sqrt{5}}{2}\right)$. Select an initial point $(u^{0},v_{E}^{0},v_{I}^{0},\hat{v}_{I}^{0},w^{0},s^{0},x^{0},y^{0},z^{0})\in \mathbb{R}^{m}\times\mathbb{R}^{m_E}\times\mathbb{R}^{m_I}\times\mathbb{R}^{m_I}\times\mathbb{R}^{m}\times\mathbb{R}^{n}\times \mathbb{R}^{n}\times\mathbb{R}^{m}\times\mathbb{R}^{m_I}$. For $k=0,1,\ldots$, iterate the following steps.
\begin{description}
\item [Step 1.] Compute
\begin{eqnarray*}
\left(\begin{array}{l}
u^{k+1}\\v_{E}^{k+1}\\v_{I}^{k+1}
\end{array}\right)
&=&
M^{-1}\textrm{rhs}^{k},\\
\left(\begin{array}{l}
\hat{v}_{I}^{k+1}\\w^{k+1}\\s^{k+1}
\end{array}\right)
&=&
\left(\begin{array}{c}
\Pi_{\mathbb{R}^{m_I}_{-}}(v_{I}^{k+1}-\sigma^{-1}z^{k})\\
\textrm{Prox}_{h^{*}/\sigma}(\sigma^{-1}y^{k}+u^{k+1})\\
\textrm{Prox}_{p^{*}/\sigma}(\sigma^{-1}x^{k}-\cA^{*}u^{k+1}-\cB_{E}^{*}v_{E}^{k+1}-\cB_{I}^{*}v_{I}^{k+1})
\end{array}\right).
\label{eq:subproblem for ADMM}
\end{eqnarray*}

\item [Step 2.] Compute
\begin{eqnarray*}
\left\{\begin{array}{l}
x^{k+1}=x^{k}-\tau\sigma(\mathcal{A}^{*}u^{k+1}+\cB_{E}^{*}v_{E}^{k+1}+\cB_{I}^{*}v_{I}^{k+1}+s^{k+1}),\\
y^{k+1}=y^{k}-\tau\sigma(w^{k+1}-u^{k+1}),\\
z^{k+1}=z^{k}-\tau\sigma(v_{I}^{k+1}-\hat{v}_{I}^{k+1}).
\end{array}\right.
\end{eqnarray*}
\end{description}}}

\bigskip

\subsection{The description of the computed problems}
\label{subsec:The description of the computed problems}

In the numerical experiments, we focus on three classes of special problems.

\paragraph{\uppercase\expandafter{\romannumeral1}: the general constrained problem.}

The general constrained problem has the following form
\begin{eqnarray}
\min_{x\in\mathbb{R}^{n}}\Big\{\norm{A x-b}+\lambda_1\sum_{j=1}^{J}\omega_{j}\norm{x_{G_j}}+\lambda_2\norm{x}_{1}\,\Big|\,B_{E}x=0,\, \,B_{I}x\geq 0\Big\},\label{eq:general constrained problem}
\end{eqnarray}
where $A\in\mathbb{R}^{m\times n}$, $B_{E}\in\mathbb{R}^{m_{E}\times n}$ with the $i$th row $(B_{E})_{i,:}:=[0_{k_i}^{T}\; 0_{k_i}^{T}\; \cdots\; 1_{k_i}^{T}\; 0_{k_i}^{T} \cdots 1_{k_i}^{T}]$, $l_{b}=0_{n}$, and $l_{u}=1_{n}$, and $B_{I}\in\mathbb{R}^{m_{I}\times n}$ is generated in the same way as $B_{E}$. Note that $0_{k_i}\in\mathbb{R}^{k_i}$ and $0_{n}\in\mathbb{R}^{n}$ are vectors of all zeros, $1_{k_i}\in\mathbb{R}^{k_i}$ and $1_{n}\in\mathbb{R}^{n}$ are vectors of all ones. As for the details for the equality constraint and inequality constraint, one may see \cite{GainesZ2018}.

%\paragraph{\uppercase\expandafter{\romannumeral2}: the equality and box constrained problem.}
%
%The equality and box constrained problem has the following form
%\begin{eqnarray}
%\min_{x\in\mathbb{R}^{n}}\{\norm{A x-b}+\lambda_1\sum_{j=1}^{J}\omega_{j}\norm{x_{G_j}}+\lambda_2\norm{x}_{1}\,|\,B_{E}x=0,\, l_{b} \leq x\leq u_{b}\},\label{eq:equality-box}
%\end{eqnarray}
%where $A\in\mathbb{R}^{m\times n}$, $B_{E}\in\mathbb{R}^{m_{E}\times n}$ are generated in the same way as that in the problem \eqref{eq:general constrained problem}. As for the details for the box constraint, one may see \cite{AtitallahTKAAH}.

\paragraph{\uppercase\expandafter{\romannumeral2}: the reparameterization problem.}

The equality constrained problem has the following form
\begin{eqnarray}
\min_{x\in\mathbb{R}^{n}}\Big\{\norm{A x-b}+\lambda_1\sum_{j=1}^{J}\omega_{j}\norm{x_{G_j}}+\lambda_2\norm{x}_{1}\,\Big|\,B_{E}x=0\Big\},\label{eq:reparameterization}
\end{eqnarray}
where $A\in\mathbb{R}^{m\times n}$ and $B_{E}\in\mathbb{R}^{m_{E}\times n}$. Note that $B_{E}$ is generated in the same way as that in the first class of problem.

\paragraph{\uppercase\expandafter{\romannumeral3}: the sum-to-zero constraint problem.}

The problem is as follows
\begin{eqnarray}
\min_{x\in\mathbb{R}^{n}}\Big\{\norm{A x-b}+\lambda_1\sum_{j=1}^{J}\omega_{j}\norm{x_{G_j}}+\lambda_2\norm{x}_{1}\,\Big|\,\sum_{i}x_{i}=0\Big\},\label{eq:sum-to-zero constraints}
\end{eqnarray}
where $A\in\mathbb{R}^{m\times n}$. This type of constraint on the Lasso has been considered in \cite{AltenbuchingerRZSDWHGHOS,LinSFL,ShiZL} for the analysis of compositional data.

%\paragraph{\uppercase\expandafter{\romannumeral3}: the overlapping constraint problem.}
%
%For this class of problems, one may refer to \cite{JacobOV,YuanLY}.

%\paragraph{\uppercase\expandafter{\romannumeral3}: the isotonic regression problem.}
%
%This problem has its form as below
%\begin{eqnarray}
%\min_{x\in\mathbb{R}^{n}}\{\norm{A x-b}+\lambda_1\sum_{j=1}^{J}\omega_{j}\norm{x_{G_j}}+\lambda_2\norm{x}_{1}\,|\,x_{1}\leq \ldots\leq x_{n}\},\label{eq:isotonic regression}
%\end{eqnarray}
%where $A\in\mathbb{R}^{m\times n}$. For more details, one may refer to \cite{BarlowBBB,WuWM}.

\subsection{Numerical results on the synthetic data problems}
\label{subsec:Numerical results on the synthetic data problems}

In this subsection, we compare the the numerical performances between the SSN-ALM and the semi-proximal ADMM on the synthetic data problems. For the synthetic data problems, we only focus on the problem \eqref{eq:general constrained problem}. In our comparison, we report the problem (pbname), the number of samples ($m$), features ($n$) and nonzero elements (nnz), $\lambda_{1}$, $\lambda_{2}$, the relative KKT residual ($\eta_{kkt}$), the primal objective value (pobj), the iteration number (iter) (for the SSN-ALM, it also includes the total number of Newton iterations in the bracket) and the running time (time) in the format of ``hours:minutes:seconds''. For simplicity, we use``$s\ \mbox{sign}(t)|t|$'' to denote a number of the form ``$s\times 10^{t}$'', e.g., 1.0-3 denotes $1.0\times 10^{-3}$. We present the numerical results in Tables \ref{table-random1} and \ref{table-random2}. The numerical results show that the SSN-ALM can solve all the synthetic problems efficiently and supply highly accurate solutions. In contrast, the semi-proximal ADMM can only solve a few problems in much more time, and for most problems it cannot supply satisfying solutions within the maximum time.

\begin{table}[!htbp]
\scriptsize
\begin{center}
\setlength{\belowcaptionskip}{10pt}
\parbox{.85\textwidth}{\caption{The performances of the SSN-ALM and semi-proximal ADMM on the synthetic datasets for the problem \eqref{eq:general constrained problem} with the parameter setting ($S_{1}$). In this table, ``$a$''=SSN-ALM, ``$b$"=semi-proximal ADMM.}\label{table-random1}}
\begin{tabular}{|p{2.5cm}<{\centering}|p{0.8cm}<{\centering}|p{0.8cm}<{\centering}|p{0.4cm}<{\centering}|p{1.6cm}<{\centering}|p{2.8cm}<{\centering}|p{1.9cm}<{\centering}|p{1.7cm}<{\centering}|}
\hline
\multirow{1}{*}{pbname}&\multirow{3}{*}{$\lambda_{1}$}&\multirow{3}{*}{$\lambda_{2}$}&\multirow{3}{*}{nnz}&\multicolumn{1}{c|}{$\eta_{kkt}$}&\multicolumn{1}{c|}{$\pobj$}&\multicolumn{1}{c|}{iter}&\multicolumn{1}{c|}{time}\\
\cline{5-8} $(m,n,m_{E},m_{I})$ & & & & \multirow{2}{*}{$a\ |\ b$} & \multirow{2}{*}{$a\ |\ b$} & \multirow{2}{*}{$a\ |\ b$} &\multirow{2}{*}{$a\ |\ b$}\\[2pt]
$J$ & & & & & & & \\
\hline
\mc{1}{|c|}{rand1}
&\multirow{1}{*}{6.638-3} &\multirow{1}{*}{6.638-3} &\multirow{1}{*}{249}&\multirow{1}{*}{ 9.8-7 $|$ 9.9-7} &\multirow{1}{*}{ 3.2601+2 $|$  3.2600+2} &\multirow{1}{*}{18(95) $|$ 3287}  &\multirow{1}{*}{03 $|$ 54:47}
\\[2pt]
\mc{1}{|c|}{(100,10000,24,24);} &\multirow{1}{*}{6.638-4} &\multirow{1}{*}{6.638-4} &\multirow{1}{*}{249} &\multirow{1}{*}{ 6.1-7 $|$  9.9-7} &\multirow{1}{*}{ 3.2606+1 $|$  3.2600+1} &\multirow{1}{*}{21(104) $|$ 8359} &\multirow{1}{*}{04 $|$ 31:21}\\[2pt]
\mc{1}{|c|}{1000} &\multirow{1}{*}{6.638-5} &\multirow{1}{*}{6.638-5} &\multirow{1}{*}{252}&\multirow{1}{*}{ 9.3-7 $|$  1.4-4} &\multirow{1}{*}{3.2777+0 $|$  3.2827+0} &\multirow{1}{*}{33(180) $|$ 10000}  &\multirow{1}{*}{06 $|$ 43:13}\\[2pt]
\hline
\mc{1}{|c|}{rand2}
&\multirow{1}{*}{9.458-2} &\multirow{1}{*}{9.458-2} &\multirow{1}{*}{184}&\multirow{1}{*}{ 7.6-7 $|$ 2.5-1} &\multirow{1}{*}{ 1.3391+3 $|$  1.4223+3} &\multirow{1}{*}{18(116) $|$ 402}  &\multirow{1}{*}{2:11 $|$ 4:00:09}
\\[2pt]
\mc{1}{|c|}{(100,1000000,24,24);} &\multirow{1}{*}{9.458-3} &\multirow{1}{*}{9.458-3} &\multirow{1}{*}{184} &\multirow{1}{*}{ 7.3-7 $|$  3.0+0} &\multirow{1}{*}{ 1.3391+2 $|$  1.4056+2} &\multirow{1}{*}{24(133) $|$ 404} &\multirow{1}{*}{2:40 $|$ 4:00:01}\\[2pt]
\mc{1}{|c|}{100000} &\multirow{1}{*}{9.458-4} &\multirow{1}{*}{9.458-4} &\multirow{1}{*}{184}&\multirow{1}{*}{ 7.6-7 $|$  5.0+0} &\multirow{1}{*}{1.3391+1 $|$  1.4711+1} &\multirow{1}{*}{18(116) $|$ 406}  &\multirow{1}{*}{2:16 $|$ 4:00:02}\\[2pt]
\hline
\mc{1}{|c|}{rand3}
&\multirow{1}{*}{1.706-1} &\multirow{1}{*}{1.706-1} &\multirow{1}{*}{165}&\multirow{1}{*}{ 6.5-7 $|$ 1.0-1} &\multirow{1}{*}{ 3.8494+3 $|$  4.8077+3} &\multirow{1}{*}{19(115) $|$ 140}  &\multirow{1}{*}{13 $|$ 4:01:41}
\\[2pt]
\mc{1}{|c|}{(100,3000000,24,24);} &\multirow{1}{*}{1.706-2} &\multirow{1}{*}{1.706-2} &\multirow{1}{*}{166} &\multirow{1}{*}{ 9.1-7 $|$  1.5+0} &\multirow{1}{*}{ 3.8493+2 $|$  4.6124+2} &\multirow{1}{*}{25(148) $|$ 140} &\multirow{1}{*}{20:38 $|$ 4:00:10}\\[2pt]
\mc{1}{|c|}{300000} &\multirow{1}{*}{1.706-3} &\multirow{1}{*}{1.706-3} &\multirow{1}{*}{166}&\multirow{1}{*}{ 9.9-7 $|$  4.8+0} &\multirow{1}{*}{3.8490+1 $|$  4.7712+1} &\multirow{1}{*}{69(373) $|$ 134}  &\multirow{1}{*}{35:03 $|$ 4:01:45}\\[2pt]
\hline
\end{tabular}
\end{center}
\end{table}

\begin{table}[!htbp]
\scriptsize
\begin{center}
\setlength{\belowcaptionskip}{10pt}
\parbox{.85\textwidth}{\caption{The performances of the SSN-ALM and semi-proximal ADMM on the synthetic datasets for the problem \eqref{eq:general constrained problem} with the parameter setting ($S_{2}$). In this table, ``$a$''=SSN-ALM, ``$b$"=semi-proximal ADMM.}\label{table-random2}}
\begin{tabular}{|p{2.5cm}<{\centering}|p{0.8cm}<{\centering}|p{0.8cm}<{\centering}|p{0.4cm}<{\centering}|p{1.6cm}<{\centering}|p{2.8cm}<{\centering}|p{1.9cm}<{\centering}|p{1.7cm}<{\centering}|}
\hline
\multirow{1}{*}{pbname}&\multirow{3}{*}{$\lambda_{1}$}&\multirow{3}{*}{$\lambda_{2}$}&\multirow{3}{*}{nnz}&\multicolumn{1}{c|}{$\eta_{kkt}$}&\multicolumn{1}{c|}{$\pobj$}&\multicolumn{1}{c|}{iter}&\multicolumn{1}{c|}{time}\\
\cline{5-8} $(m,n,m_{E},m_{I})$ & & & & \multirow{2}{*}{$a\ |\ b$} & \multirow{2}{*}{$a\ |\ b$} & \multirow{2}{*}{$a\ |\ b$} &\multirow{2}{*}{$a\ |\ b$}\\[2pt]
$J$ & & & & & & & \\
\hline
\mc{1}{|c|}{rand1}
&\multirow{1}{*}{1.062-2} &\multirow{1}{*}{2.655-3} &\multirow{1}{*}{499}&\multirow{1}{*}{ 5.7-7 $|$ 9.9-7} &\multirow{1}{*}{ 3.7078+2 $|$  3.7073+2} &\multirow{1}{*}{19(95) $|$ 3762}  &\multirow{1}{*}{03 $|$ 13:39}
\\[2pt]
\mc{1}{|c|}{(100,10000,24,24);} &\multirow{1}{*}{1.062-3} &\multirow{1}{*}{2.655-4} &\multirow{1}{*}{499} &\multirow{1}{*}{ 8.9-7 $|$  9.9-7} &\multirow{1}{*}{ 3.7132+1 $|$  3.2600+1} &\multirow{1}{*}{22(107) $|$ 8861} &\multirow{1}{*}{04 $|$ 33:47}\\[2pt]
\mc{1}{|c|}{1000} &\multirow{1}{*}{1.062-4} &\multirow{1}{*}{2.655-5} &\multirow{1}{*}{502}&\multirow{1}{*}{ 9.4-7 $|$  5.3-5} &\multirow{1}{*}{3.7696+0 $|$  3.7283+0} &\multirow{1}{*}{31(160) $|$ 10000}  &\multirow{1}{*}{06 $|$ 43:09}\\[2pt]
\hline
\mc{1}{|c|}{rand2}
&\multirow{1}{*}{1.513-1} &\multirow{1}{*}{3.783-2} &\multirow{1}{*}{489}&\multirow{1}{*}{ 4.9-7 $|$ 2.1-1} &\multirow{1}{*}{ 6.9436+2 $|$  7.3304+2} &\multirow{1}{*}{17(101) $|$ 364}  &\multirow{1}{*}{2:01 $|$ 4:00:22}
\\[2pt]
\mc{1}{|c|}{(100,1000000,24,24);} &\multirow{1}{*}{1.513-2} &\multirow{1}{*}{3.783-3} &\multirow{1}{*}{488} &\multirow{1}{*}{ 7.9-7 $|$  1.4+0} &\multirow{1}{*}{ 6.9434+1 $|$  7.2685+1} &\multirow{1}{*}{23(118) $|$ 383} &\multirow{1}{*}{2:48 $|$ 4:00:10}\\[2pt]
\mc{1}{|c|}{100000} &\multirow{1}{*}{1.513-3} &\multirow{1}{*}{3.783-4} &\multirow{1}{*}{492}&\multirow{1}{*}{ 9.1-7 $|$  4.2+0} &\multirow{1}{*}{6.9425+0 $|$  7.5660+0} &\multirow{1}{*}{55(272) $|$ 359}  &\multirow{1}{*}{5:38 $|$ 4:00:23}\\[2pt]
\hline
\mc{1}{|c|}{rand3}
&\multirow{1}{*}{2.730-3} &\multirow{1}{*}{6.826-4} &\multirow{1}{*}{513}&\multirow{1}{*}{ 3.9-7 $|$ 7.3-2} &\multirow{1}{*}{ 2.0434+3 $|$  2.4657+3} &\multirow{1}{*}{18(98) $|$ 119}  &\multirow{1}{*}{14:09 $|$ 4:02:50}
\\[2pt]
\mc{1}{|c|}{(100,3000000,24,24);} &\multirow{1}{*}{2.730-4} &\multirow{1}{*}{6.826-5} &\multirow{1}{*}{513} &\multirow{1}{*}{ 9.0-7 $|$  1.5+0} &\multirow{1}{*}{ 2.0435+2 $|$  2.3764+2} &\multirow{1}{*}{22(122) $|$ 115} &\multirow{1}{*}{21:26 $|$ 4:00:33}\\[2pt]
\mc{1}{|c|}{300000} &\multirow{1}{*}{2.730-5} &\multirow{1}{*}{6.826-6} &\multirow{1}{*}{513}&\multirow{1}{*}{ 9.6-7 $|$  5.5+0} &\multirow{1}{*}{2.0436+1 $|$  2.4743+1} &\multirow{1}{*}{59(306) $|$ 130}  &\multirow{1}{*}{47:05 $|$ 4:01:33}\\[2pt]
\hline
\end{tabular}
\end{center}
\end{table}

\subsection{Numerical results on the UCI data problems}
\label{subsec:Numerical results on the UCI data problems}

In this subsection, we also compare the the numerical performances between the SSN-ALM and the semi-proximal ADMM on the UCI data problems. We focus on the problems \eqref{eq:reparameterization} and \eqref{eq:sum-to-zero constraints}. We list the numerical results in Tables \ref{table-problem2-S1}-\ref{table-problem3-S2}. From the results, we can see that the SSN-ALM outperforms the semi-proximal ADMM by a large margin for the UCI data problems. Specifically, we can obtain highly accurate solutions efficiently by the SSN-ALM for all the problems. In contrast, the semi-proximal ADMM can hardly supply solutions that meet the accuracy requirement within the maximum time for nearly all the problems.

\begin{table}[!htbp]
\scriptsize
\begin{center}
\setlength{\belowcaptionskip}{10pt}
\parbox{.85\textwidth}{\caption{The performances of the SSN-ALM and semi-proximal ADMM on the synthetic datasets for the problem \eqref{eq:reparameterization} with the parameter setting ($S_{1}$). In this table, ``$a$''=SSN-ALM, ``$b$"=semi-proximal ADMM.}\label{table-problem2-S1}}
\begin{tabular}{|p{2.5cm}<{\centering}|p{0.8cm}<{\centering}|p{0.8cm}<{\centering}|p{0.4cm}<{\centering}|p{1.6cm}<{\centering}|p{2.8cm}<{\centering}|p{1.9cm}<{\centering}|p{1.7cm}<{\centering}|}
\hline
\multirow{1}{*}{pbname}&\multirow{3}{*}{$\lambda_{1}$}&\multirow{3}{*}{$\lambda_{2}$}&\multirow{3}{*}{nnz}&\multicolumn{1}{c|}{$\eta_{kkt}$}&\multicolumn{1}{c|}{$\pobj$}&\multicolumn{1}{c|}{iter}&\multicolumn{1}{c|}{time}\\
\cline{5-8} $(m,n,m_{E},m_{I})$ & & & & \multirow{2}{*}{$a\ |\ b$} & \multirow{2}{*}{$a\ |\ b$} & \multirow{2}{*}{$a\ |\ b$} &\multirow{2}{*}{$a\ |\ b$}\\[2pt]
$J$ & & & & & & & \\[2pt]
\hline
\mc{1}{|c|}{E2006.train}
&\multirow{1}{*}{1.940-1} &\multirow{1}{*}{1.940-1} &\multirow{1}{*}{20}&\multirow{1}{*}{ 4.6-7 $|$ 1.2-2} &\multirow{1}{*}{ 4.5613+3 $|$  4.4467+3} &\multirow{1}{*}{10(19) $|$ 30}  &\multirow{1}{*}{1:26 $|$ 4:05:15}
\\[2pt]
\mc{1}{|c|}{(16087,150360,48,0);} &\multirow{1}{*}{1.940-2} &\multirow{1}{*}{1.940-2} &\multirow{1}{*}{3} &\multirow{1}{*}{ 2.3-7 $|$  6.3-3} &\multirow{1}{*}{ 4.8716+2 $|$  4.8805+2} &\multirow{1}{*}{13(39) $|$ 32} &\multirow{1}{*}{2:47 $|$ 4:06:38}\\[2pt]
\mc{1}{|c|}{7518} &\multirow{1}{*}{1.940-3} &\multirow{1}{*}{1.940-3} &\multirow{1}{*}{89}&\multirow{1}{*}{ 6.6-7 $|$  5.8-3} &\multirow{1}{*}{9.1671+1 $|$  9.2247+1} &\multirow{1}{*}{17(78) $|$ 30}  &\multirow{1}{*}{10:41 $|$ 4:03:00}\\[2pt]
\hline
\mc{1}{|c|}{triazines.scale.expanded}
&\multirow{1}{*}{6.062-2} &\multirow{1}{*}{6.062-2} &\multirow{1}{*}{737}&\multirow{1}{*}{ 9.2-7 $|$ 6.6-3} &\multirow{1}{*}{ 6.9373-1 $|$  6.9651-1} &\multirow{1}{*}{28(157) $|$ 433}  &\multirow{1}{*}{4:16 $|$ 4:00:11}
\\[2pt]
\mc{1}{|c|}{(186,635376,48,0);} &\multirow{1}{*}{6.062-3} &\multirow{1}{*}{6.062-3} &\multirow{1}{*}{1013} &\multirow{1}{*}{ 8.9-7 $|$  6.3-3} &\multirow{1}{*}{ 2.9501-1 $|$  2.9610-1} &\multirow{1}{*}{43(195) $|$ 412} &\multirow{1}{*}{4:48 $|$ 4:00:06}\\[2pt]
\mc{1}{|c|}{31769} &\multirow{1}{*}{6.062-4} &\multirow{1}{*}{6.062-4} &\multirow{1}{*}{1024}&\multirow{1}{*}{ 8.3-7 $|$  2.8-3} &\multirow{1}{*}{2.3169-1 $|$  2.5109-1} &\multirow{1}{*}{51(327) $|$ 364}  &\multirow{1}{*}{7:07 $|$ 4:00:28}\\[2pt]
\hline
\mc{1}{|c|}{pyrim.scale.expanded}
&\multirow{1}{*}{2.440-4} &\multirow{1}{*}{2.440-4} &\multirow{1}{*}{424}&\multirow{1}{*}{ 8.8-7 $|$ 4.0-5} &\multirow{1}{*}{ 6.9227-1 $|$  3.7804-1} &\multirow{1}{*}{28(294) $|$ 8491}  &\multirow{1}{*}{56 $|$ 4:00:01}
\\[2pt]
\mc{1}{|c|}{(74,201376,48,0);} &\multirow{1}{*}{2.440-5} &\multirow{1}{*}{2.440-5} &\multirow{1}{*}{568} &\multirow{1}{*}{ 9.4-7 $|$  6.5-6} &\multirow{1}{*}{ 1.2228-1 $|$  7.9402-2} &\multirow{1}{*}{32(253) $|$ 9909} &\multirow{1}{*}{56 $|$ 4:00:00}\\[2pt]
\mc{1}{|c|}{10069} &\multirow{1}{*}{2.440-6} &\multirow{1}{*}{2.440-6} &\multirow{1}{*}{787}&\multirow{1}{*}{ 8.8-7 $|$  4.8-6} &\multirow{1}{*}{6.6985-2 $|$  6.5592-2} &\multirow{1}{*}{23(143) $|$ 8251}  &\multirow{1}{*}{1:02 $|$ 4:00:00}\\[2pt]
\hline
\mc{1}{|c|}{housing.scale.expanded}
&\multirow{1}{*}{5.701+0} &\multirow{1}{*}{5.701+0} &\multirow{1}{*}{27}&\multirow{1}{*}{ 4.9-7 $|$ 2.2-2} &\multirow{1}{*}{ 4.7022+4 $|$  4.6289+4} &\multirow{1}{*}{10(67) $|$ 238}  &\multirow{1}{*}{36 $|$ 4:00:08}
\\[2pt]
\mc{1}{|c|}{(506,77520,48,0);} &\multirow{1}{*}{5.701-1} &\multirow{1}{*}{5.701-1} &\multirow{1}{*}{159} &\multirow{1}{*}{ 7.0-7 $|$  1.0-2} &\multirow{1}{*}{ 3.4639+3 $|$  4.6442+3} &\multirow{1}{*}{12(86) $|$ 282} &\multirow{1}{*}{51 $|$ 4:00:34}\\[2pt]
\mc{1}{|c|}{3876} &\multirow{1}{*}{5.701-2} &\multirow{1}{*}{5.701-2} &\multirow{1}{*}{527}&\multirow{1}{*}{ 4.7-7 $|$  1.0-3} &\multirow{1}{*}{5.3645+1 $|$  2.4724+2} &\multirow{1}{*}{15(96) $|$ 305}  &\multirow{1}{*}{1:07 $|$ 4:00:23}\\[2pt]
\hline
\end{tabular}
\end{center}
\end{table}

\begin{table}[!htbp]
\scriptsize
\begin{center}
\setlength{\belowcaptionskip}{10pt}
\parbox{.85\textwidth}{\caption{The performances of the SSN-ALM and semi-proximal ADMM on the synthetic datasets for the problem \eqref{eq:reparameterization} with the parameter setting ($S_{2}$). In this table, ``$a$''=SSN-ALM, ``$b$"=semi-proximal ADMM.}\label{table-problem2-S2}}
\begin{tabular}{|p{2.5cm}<{\centering}|p{0.8cm}<{\centering}|p{0.8cm}<{\centering}|p{0.4cm}<{\centering}|p{1.6cm}<{\centering}|p{2.8cm}<{\centering}|p{1.9cm}<{\centering}|p{1.7cm}<{\centering}|}
\hline
\multirow{1}{*}{pbname}&\multirow{3}{*}{$\lambda_{1}$}&\multirow{3}{*}{$\lambda_{2}$}&\multirow{3}{*}{nnz}&\multicolumn{1}{c|}{$\eta_{kkt}$}&\multicolumn{1}{c|}{$\pobj$}&\multicolumn{1}{c|}{iter}&\multicolumn{1}{c|}{time}\\
\cline{5-8} $(m,n,m_{E},m_{I})$ & & & & \multirow{2}{*}{$a\ |\ b$} & \multirow{2}{*}{$a\ |\ b$} & \multirow{2}{*}{$a\ |\ b$} &\multirow{2}{*}{$a\ |\ b$}\\[2pt]
$J$ & & & & & & & \\[2pt]
\hline
\mc{1}{|c|}{E2006.train}
&\multirow{1}{*}{3.103-1} &\multirow{1}{*}{7.759-2} &\multirow{1}{*}{20}&\multirow{1}{*}{ 4.9-7 $|$ 1.2-2} &\multirow{1}{*}{ 7.2676+3 $|$  7.08461+3} &\multirow{1}{*}{10(20) $|$ 30}  &\multirow{1}{*}{1:47 $|$ 4:05:41}
\\[2pt]
\mc{1}{|c|}{(16087,150360,48,0);} &\multirow{1}{*}{3.103-2} &\multirow{1}{*}{7.759-3} &\multirow{1}{*}{3} &\multirow{1}{*}{ 7.3-7 $|$  1.6-3} &\multirow{1}{*}{ 7.4997+2 $|$  7.5035+2} &\multirow{1}{*}{12(39) $|$ 30} &\multirow{1}{*}{3:07 $|$ 4:04:14}\\[2pt]
\mc{1}{|c|}{7518} &\multirow{1}{*}{3.103-3} &\multirow{1}{*}{7.759-4} &\multirow{1}{*}{600}&\multirow{1}{*}{ 9.2-7 $|$  6.1-3} &\multirow{1}{*}{1.2102+2 $|$  1.1832+2} &\multirow{1}{*}{17(99) $|$ 27}  &\multirow{1}{*}{16:02 $|$ 4:02:20}\\[2pt]
\hline
\mc{1}{|c|}{triazines.scale.expanded}
&\multirow{1}{*}{9.700-2} &\multirow{1}{*}{2.425-2} &\multirow{1}{*}{1813}&\multirow{1}{*}{ 7.9-7 $|$ 2.8-3} &\multirow{1}{*}{ 4.9387-1 $|$  4.9432-1} &\multirow{1}{*}{33(174) $|$ 457}  &\multirow{1}{*}{4:24 $|$ 4:00:17}
\\[2pt]
\mc{1}{|c|}{(186,635376,48,0);} &\multirow{1}{*}{9.700-3} &\multirow{1}{*}{2.425-3} &\multirow{1}{*}{1822} &\multirow{1}{*}{ 8.8-7 $|$  1.4-3} &\multirow{1}{*}{ 2.5726-1 $|$  2.5802-1} &\multirow{1}{*}{45(230) $|$ 422} &\multirow{1}{*}{6:06 $|$ 4:00:13}\\[2pt]
\mc{1}{|c|}{31769} &\multirow{1}{*}{9.700-4} &\multirow{1}{*}{2.425-4} &\multirow{1}{*}{1940}&\multirow{1}{*}{ 9.5-7 $|$  6.2-4} &\multirow{1}{*}{2.2746-1 $|$  2.4322-1} &\multirow{1}{*}{47(368) $|$ 449}  &\multirow{1}{*}{7:28 $|$ 4:00:28}\\[2pt]
\hline
\mc{1}{|c|}{pyrim.scale.expanded}
&\multirow{1}{*}{3.903-4} &\multirow{1}{*}{9.758-5} &\multirow{1}{*}{1211}&\multirow{1}{*}{ 3.9-7 $|$ 1.0-5} &\multirow{1}{*}{ 7.9688-1 $|$  6.8910-1} &\multirow{1}{*}{26(239) $|$ 9004}  &\multirow{1}{*}{50 $|$ 4:00:01}
\\[2pt]
\mc{1}{|c|}{(74,201376,48,0);} &\multirow{1}{*}{3.903-5} &\multirow{1}{*}{9.758-6} &\multirow{1}{*}{1229} &\multirow{1}{*}{ 8.7-7 $|$  8.2-6} &\multirow{1}{*}{ 1.2931-1 $|$  9.1925-2} &\multirow{1}{*}{23(179) $|$ 7885} &\multirow{1}{*}{52 $|$ 4:00:01}\\[2pt]
\mc{1}{|c|}{10069} &\multirow{1}{*}{3.903-6} &\multirow{1}{*}{9.758-7} &\multirow{1}{*}{1588}&\multirow{1}{*}{ 7.8-7 $|$  2.3-6} &\multirow{1}{*}{6.8215-2 $|$  6.6066-2} &\multirow{1}{*}{24(132) $|$ 9644}  &\multirow{1}{*}{1:25 $|$ 4:00:01}\\[2pt]
\hline
\mc{1}{|c|}{housing.scale.expanded}
&\multirow{1}{*}{9.121+0} &\multirow{1}{*}{2.280+0} &\multirow{1}{*}{25}&\multirow{1}{*}{ 4.5-7 $|$ 1.9-2} &\multirow{1}{*}{ 3.0438+5 $|$  9.2003+4} &\multirow{1}{*}{14(98) $|$ 267}  &\multirow{1}{*}{33 $|$ 4:00:45}
\\[2pt]
\mc{1}{|c|}{(506,77520,48,0);} &\multirow{1}{*}{9.121-1} &\multirow{1}{*}{2.280-1} &\multirow{1}{*}{362} &\multirow{1}{*}{ 3.0-7 $|$  5.1-3} &\multirow{1}{*}{ 1.1145+4 $|$  7.8453+3} &\multirow{1}{*}{12(71) $|$ 273} &\multirow{1}{*}{43 $|$ 4:00:13}\\[2pt]
\mc{1}{|c|}{3876} &\multirow{1}{*}{9.121-1} &\multirow{1}{*}{2.280-2} &\multirow{1}{*}{1021}&\multirow{1}{*}{ 9.3-7 $|$  5.7-4} &\multirow{1}{*}{4.4121+1 $|$  4.4143+1} &\multirow{1}{*}{13(74) $|$ 293}  &\multirow{1}{*}{1:18 $|$ 4:00:33}\\[2pt]
\hline
\end{tabular}
\end{center}
\end{table}

\begin{table}[!htbp]
\scriptsize
\begin{center}
\setlength{\belowcaptionskip}{10pt}
\parbox{.85\textwidth}{\caption{The performances of the SSN-ALM and semi-proximal ADMM on the synthetic datasets for the problem \eqref{eq:reparameterization} with the parameter setting ($S_{1}$). In this table, ``$a$''=SSN-ALM, ``$b$"=semi-proximal ADMM.}\label{table-problem3-S1}}
\begin{tabular}{|p{2.5cm}<{\centering}|p{0.85cm}<{\centering}|p{0.85cm}<{\centering}|p{0.4cm}<{\centering}|p{1.6cm}<{\centering}|p{2.8cm}<{\centering}|p{1.9cm}<{\centering}|p{1.7cm}<{\centering}|}
\hline
\multirow{1}{*}{pbname}&\multirow{3}{*}{$\lambda_{1}$}&\multirow{3}{*}{$\lambda_{2}$}&\multirow{3}{*}{nnz}&\multicolumn{1}{c|}{$\eta_{kkt}$}&\multicolumn{1}{c|}{$\pobj$}&\multicolumn{1}{c|}{iter}&\multicolumn{1}{c|}{time}\\
\cline{5-8} $(m,n,m_{E},m_{I})$ & & & & \multirow{2}{*}{$a\ |\ b$} & \multirow{2}{*}{$a\ |\ b$} & \multirow{2}{*}{$a\ |\ b$} &\multirow{2}{*}{$a\ |\ b$}\\[2pt]
$J$ & & & & & & & \\[2pt]
\hline\mc{1}{|c|}{housing.scale.expanded}
	 &\multirow{1}{*}{5.701+0} &\multirow{1}{*}{5.701+0} &\multirow{1}{*}{14}&\multirow{1}{*}{5.7-7 $|$  1.4-3} &\multirow{1}{*}{ 3.7574+2 $|$  3.2811+2} &\multirow{1}{*}{11(48) $|$ 10000}  &\multirow{1}{*}{27 $|$ 1:53:57}\\[2pt]
\mc{1}{|c|}{(506,77520,1,0);} &\multirow{1}{*}{5.701-1} &\multirow{1}{*}{5.701-1} &\multirow{1}{*}{68} &\multirow{1}{*}{ 4.4-7 $|$  1.6-6} &\multirow{1}{*}{ 1.6595+5 $|$  1.6578+5} &\multirow{1}{*}{12(58) $|$ 10000} &\multirow{1}{*}{38 $|$ 2:10:06}\\[2pt]
\mc{1}{|c|}{7752} &\multirow{1}{*}{5.701-2} &\multirow{1}{*}{5.701-2} &\multirow{1}{*}{429}&\multirow{1}{*}{ 4.8-7 $|$  9.9-7} &\multirow{1}{*}{1.3394+4 $|$  1.3394+4} &\multirow{1}{*}{12(55) $|$ 2956}  &\multirow{1}{*}{33 $|$ 26:11}\\[2pt]
\hline
\mc{1}{|c|}{E2006.test}
&\multirow{1}{*}{4.733-2} &\multirow{1}{*}{4.733-2} &\multirow{1}{*}{10}&\multirow{1}{*}{ 8.3-7 $|$ 4.1-3} &\multirow{1}{*}{ 1.2016+4 $|$  1.2211+4} &\multirow{1}{*}{4(17) $|$ 1124}  &\multirow{1}{*}{1:32 $|$ 4:00:09}
\\[2pt]
\mc{1}{|c|}{(3308,150358,1,0);} &\multirow{1}{*}{4.733-3} &\multirow{1}{*}{4.733-3} &\multirow{1}{*}{10} &\multirow{1}{*}{ 3.8-7 $|$  4.8-4} &\multirow{1}{*}{ 1.2219+3 $|$  1.2192+3} &\multirow{1}{*}{10(20) $|$ 934} &\multirow{1}{*}{59 $|$ 4:00:08}\\[2pt]
\mc{1}{|c|}{15036} &\multirow{1}{*}{4.733-4} &\multirow{1}{*}{4.733-4} &\multirow{1}{*}{695}&\multirow{1}{*}{ 6.8-7 $|$  9.0-4} &\multirow{1}{*}{1.3575+2 $|$  1.3618+2} &\multirow{1}{*}{19(104) $|$ 1144}  &\multirow{1}{*}{2:33 $|$ 4:00:02}\\[2pt]
\hline
\mc{1}{|c|}{pyrim.scale.expanded}
&\multirow{1}{*}{2.400-4} &\multirow{1}{*}{2.400-4} &\multirow{1}{*}{474}&\multirow{1}{*}{ 4.2-7 $|$ 4.7-5} &\multirow{1}{*}{ 5.2631+0 $|$  4.3656+0} &\multirow{1}{*}{16(87) $|$ 10000}  &\multirow{1}{*}{29 $|$ 2:47:58}
\\[2pt]
\mc{1}{|c|}{(74,201376,1,0);} &\multirow{1}{*}{2.400-5} &\multirow{1}{*}{2.400-5} &\multirow{1}{*}{476} &\multirow{1}{*}{ 9.9-7 $|$  1.3-5} &\multirow{1}{*}{ 5.7609-1 $|$  2.1273-1} &\multirow{1}{*}{16(80) $|$ 10000} &\multirow{1}{*}{28 $|$ 2:40:48}\\[2pt]
\mc{1}{|c|}{20138} &\multirow{1}{*}{2.400-6} &\multirow{1}{*}{2.400-6} &\multirow{1}{*}{534}&\multirow{1}{*}{ 5.3-7 $|$  4.1-6} &\multirow{1}{*}{2.1273-1 $|$  7.0729-2} &\multirow{1}{*}{14(54) $|$ 10000}  &\multirow{1}{*}{28 $|$ 2:48:43}\\[2pt]
\hline
\mc{1}{|c|}{triazines.scale.expanded}
&\multirow{1}{*}{6.062-1} &\multirow{1}{*}{6.062-1} &\multirow{1}{*}{246}&\multirow{1}{*}{ 1.7-7 $|$ 8.3-2} &\multirow{1}{*}{ 2.2854+0 $|$  2.2356+0} &\multirow{1}{*}{7(56) $|$ 611}  &\multirow{1}{*}{6:28 $|$ 4:00:10}
\\[2pt]
\mc{1}{|c|}{(186,635376,1,0);} &\multirow{1}{*}{6.062-2} &\multirow{1}{*}{6.062-2} &\multirow{1}{*}{981} &\multirow{1}{*}{ 8.1-8 $|$  3.8-3} &\multirow{1}{*}{ 7.6679-1 $|$  7.6780-1} &\multirow{1}{*}{7(88) $|$ 661} &\multirow{1}{*}{7:10 $|$ 4:00:02}\\[2pt]
\mc{1}{|c|}{63538} &\multirow{1}{*}{6.062-3} &\multirow{1}{*}{6.062-3} &\multirow{1}{*}{1152}&\multirow{1}{*}{ 5.0-7 $|$  1.7-3} &\multirow{1}{*}{3.0670-1 $|$  3.0847-1} &\multirow{1}{*}{12(60) $|$ 566}  &\multirow{1}{*}{4:52 $|$ 4:00:24}\\[2pt]
\hline
\end{tabular}
\end{center}
\end{table}

\begin{table}[!htbp]
\scriptsize
\begin{center}
\setlength{\belowcaptionskip}{10pt}
\parbox{.85\textwidth}{\caption{The performances of the SSN-ALM and semi-proximal ADMM on the synthetic datasets for the problem \eqref{eq:reparameterization} with the parameter setting ($S_{2}$). In this table, ``$a$''=SSN-ALM, ``$b$"=semi-proximal ADMM.}\label{table-problem3-S2}}
\begin{tabular}{|p{2.5cm}<{\centering}|p{0.85cm}<{\centering}|p{0.85cm}<{\centering}|p{0.4cm}<{\centering}|p{1.6cm}<{\centering}|p{2.8cm}<{\centering}|p{1.9cm}<{\centering}|p{1.7cm}<{\centering}|}
\hline
\multirow{1}{*}{pbname}&\multirow{3}{*}{$\lambda_{1}$}&\multirow{3}{*}{$\lambda_{2}$}&\multirow{3}{*}{nnz}&\multicolumn{1}{c|}{$\eta_{kkt}$}&\multicolumn{1}{c|}{$\pobj$}&\multicolumn{1}{c|}{iter}&\multicolumn{1}{c|}{time}\\
\cline{5-8} $(m,n,m_{E},m_{I})$ & & & & \multirow{2}{*}{$a\ |\ b$} & \multirow{2}{*}{$a\ |\ b$} & \multirow{2}{*}{$a\ |\ b$} &\multirow{2}{*}{$a\ |\ b$}\\[2pt]
$J$ & & & & & & & \\[2pt]
\hline\mc{1}{|c|}{housing.scale.expanded}
	 &\multirow{1}{*}{9.121+0} &\multirow{1}{*}{2.280+0} &\multirow{1}{*}{19}&\multirow{1}{*}{2.0-7 $|$  8.9-4} &\multirow{1}{*}{ 3.6539+2 $|$  2.3640+2} &\multirow{1}{*}{13(54) $|$ 10000}  &\multirow{1}{*}{36 $|$ 2:00:29}\\[2pt]
\mc{1}{|c|}{(506,77520,1,0);} &\multirow{1}{*}{9.121-1} &\multirow{1}{*}{2.280-1} &\multirow{1}{*}{73} &\multirow{1}{*}{ 3.2-7 $|$  2.5-6} &\multirow{1}{*}{ 2.2607+5 $|$  2.2610+5} &\multirow{1}{*}{12(56) $|$ 10000} &\multirow{1}{*}{38 $|$ 2:45:36}\\[2pt]
\mc{1}{|c|}{7752} &\multirow{1}{*}{9.121-2} &\multirow{1}{*}{2.280-2} &\multirow{1}{*}{568}&\multirow{1}{*}{ 5.2-7 $|$  9.9-7} &\multirow{1}{*}{1.8789+4 $|$  1.8789+4} &\multirow{1}{*}{14(56) $|$ 3411}  &\multirow{1}{*}{32 $|$ 28:22}\\[2pt]
\hline
\mc{1}{|c|}{E2006.test}
&\multirow{1}{*}{7.570-2} &\multirow{1}{*}{1.890-2} &\multirow{1}{*}{10}&\multirow{1}{*}{ 3.6-7 $|$ 8.6-3} &\multirow{1}{*}{ 1.9210+4 $|$  1.9210+4} &\multirow{1}{*}{6(10) $|$ 1116}  &\multirow{1}{*}{43 $|$ 4:00:07}
\\[2pt]
\mc{1}{|c|}{(3308,150358,1,0);} &\multirow{1}{*}{7.570-3} &\multirow{1}{*}{1.890-3} &\multirow{1}{*}{10} &\multirow{1}{*}{ 7.6-7 $|$  1.1-3} &\multirow{1}{*}{ 1.9419+3 $|$  1.9482+3} &\multirow{1}{*}{9(17) $|$ 1108} &\multirow{1}{*}{1:02 $|$ 4:00:08}\\[2pt]
\mc{1}{|c|}{15036} &\multirow{1}{*}{7.570-4} &\multirow{1}{*}{1.890-4} &\multirow{1}{*}{405}&\multirow{1}{*}{ 3.4-7 $|$  8.8-4} &\multirow{1}{*}{2.0948+2 $|$  2.0952+2} &\multirow{1}{*}{19(97) $|$ 983}  &\multirow{1}{*}{2:07 $|$ 4:00:13}\\[2pt]
\hline
\mc{1}{|c|}{pyrim.scale.expanded}
&\multirow{1}{*}{3.903-4} &\multirow{1}{*}{9.758-5} &\multirow{1}{*}{539}&\multirow{1}{*}{ 5.0-7 $|$ 6.2-5} &\multirow{1}{*}{ 7.4039+0 $|$  6.7661+0} &\multirow{1}{*}{15(89) $|$ 10000}  &\multirow{1}{*}{33 $|$ 3:01:02}
\\[2pt]
\mc{1}{|c|}{(74,201376,1,0);} &\multirow{1}{*}{3.903-5} &\multirow{1}{*}{9.758-6} &\multirow{1}{*}{535} &\multirow{1}{*}{ 7.3-7 $|$  1.3-5} &\multirow{1}{*}{ 7.9365-1 $|$  3.8679-1} &\multirow{1}{*}{15(68) $|$ 10000} &\multirow{1}{*}{28 $|$ 2:46:37}\\[2pt]
\mc{1}{|c|}{20138} &\multirow{1}{*}{3.903-6} &\multirow{1}{*}{9.758-7} &\multirow{1}{*}{617}&\multirow{1}{*}{ 4.9-7 $|$  7.1-6} &\multirow{1}{*}{1.1597-1 $|$  7.6044-2} &\multirow{1}{*}{14(51) $|$ 10000}  &\multirow{1}{*}{29 $|$ 3:02:51}\\[2pt]
\hline
\mc{1}{|c|}{triazines.scale.expanded}
&\multirow{1}{*}{9.700-1} &\multirow{1}{*}{2.425-1} &\multirow{1}{*}{259}&\multirow{1}{*}{ 7.1-7 $|$ 6.2-2} &\multirow{1}{*}{ 1.9711+0 $|$  1.7830+0} &\multirow{1}{*}{9(58) $|$ 646}  &\multirow{1}{*}{6:23 $|$ 4:00:10}
\\[2pt]
\mc{1}{|c|}{(186,635376,1,0);} &\multirow{1}{*}{9.700-2} &\multirow{1}{*}{2.425-2} &\multirow{1}{*}{1152} &\multirow{1}{*}{ 5.9-7 $|$  4.7-3} &\multirow{1}{*}{ 6.3766-1 $|$  6.4020-1} &\multirow{1}{*}{13(110) $|$ 657} &\multirow{1}{*}{8:50 $|$ 4:00:17}\\[2pt]
\mc{1}{|c|}{63538} &\multirow{1}{*}{9.700-3} &\multirow{1}{*}{2.425-3} &\multirow{1}{*}{1417}&\multirow{1}{*}{ 6.3-7 $|$  1.6-3} &\multirow{1}{*}{2.7556-1 $|$  2.7690-1} &\multirow{1}{*}{11(70) $|$ 667}  &\multirow{1}{*}{4:11 $|$ 4:00:03}\\[2pt]
\hline
\end{tabular}
\end{center}
\end{table}
\section{Conclusion}
\label{sec:Conclusion}

In this paper, we have developed a dual SSN based ALM for the large-scale linearly constrained sparse group square-root Lasso problems. In this process, we mainly overcome the difficulty of dealing with two nonsmooth terms. We have established an equivalent condition to characterize the nonsingularity of the generalized Jacobian. And we have also presented the convergence theory of the algorithm. Finally, we have presented the numerical results to demonstrate the efficiency of the proposed algorithm.

%\section*{Acknowledgements}

%%
%\begin{acknowledgements}
%Acknowledgements belong here.
%\end{acknowledgements}

\end{document}